\numberwithin{equation}{section}
\newtheorem{thm}{Theorem}[section]
\newtheorem{pro}{Proposition}[section]
\newtheorem{cor}{Corollary}[section]
\newtheorem{lmm}{Lemma}[section]
\theoremstyle{definition}
\newtheorem{dfn}{Definition}[section]
\newtheorem{exa}{Example}[section]
\newtheorem{rem}{Remark}[section]
\newcommand{\B}{\mathcal{B}}
\renewcommand{\H}{\mathcal{H}}
\renewcommand{\L}{\mathcal{L}}
\newcommand{\CM}{\mathcal{R}}
\newcommand{\HH}{\mathcal{H}}
\newcommand{\cS}{\mathcal{S}}
\newcommand{\E}{\mathbb{E}}
\renewcommand{\P}{\mathbb{P}}
\newcommand{\D}{\mathrm{D}}
\newcommand{\bD}{\mathbf{D}}
\newcommand{\I}{\mathrm{I}}
\newcommand{\K}{\mathrm{K}}
\newcommand{\R}{\mathrm{R}}
\newcommand{\Tr}{\mathrm{Tr}}
\newcommand{\T}{\mathrm{T}}
\renewcommand{\d}{\mathrm{d}}
\newcommand{\e}{\mathrm{e}}
\newcommand{\la}{\langle}
\newcommand{\ra}{\rangle}
\newcommand{\F}{\mathcal{F}}
\newcommand{\1}{\mathbf{1}}
\renewcommand{\i}{\mathrm{i}}
\newcommand{\dom}{\mathrm{dom}}
\begin{document}


\title{{\bf Integration-by-Parts Characterizations of Gaussian Processes} }

\author{{\sc Ehsan Azmoodeh}\\
Ruhr-Universit\"at Bochum \\
IB 2/101 \\
GER-44780 Bochum  \\
GERMANY
%
\and 
{\sc Tommi Sottinen}\\
University of Vaasa \\
P.O. Box 700 \\
FIN-65101 Vaasa \\
FINLAND 
%
\and
{\sc Ciprian A. Tudor} \\
University of Lille 1 \\
CNRS, UMR 8524\\ Laboratoire Paul Painlev\'e\\
Cit\'e Scientifique,  B\^at. M3\\
F-59655 Villeneuve d'Ascq \\
FRANCE
\and 
{\sc Lauri Viitasaari}\\
University of Helsinki\\
P.O. Box 68 \\
FIN-00014 Helsingin yliopisto \\
FINLAND
}

\date{\today}

\maketitle

\begin{abstract}
The Malliavin integration-by-parts formula is a key ingredient to develop stochastic analysis on the Wiener space. In this article we show that a suitable integration-by-parts formula also characterizes a wide class of Gaussian processes, the so-called Gaussian Fredholm processes. 
\end{abstract}

\noindent
\textbf{2010 Mathematics Subject Classification}: 60G15, 60G12, 60H07.

\noindent
\textbf{Key Words and Phrases}: 
Gaussian processes, 
Malliavin calculus,
Stein's lemma.

\section{Introduction}

It is well-known that the law of a standard normal random variable $X$ is fully characterized by the Stein's equation (also known as the integration-by-parts formula)
\begin{equation}
\label{1}
\E \left[f'(X)\right] =\E\left[ X f(X)\right]. 
\end{equation}
More exactly, $X$ follows the standard normal distribution if and only if for any function $f\colon\mathbb{R} \to\mathbb{R}$ that is integrable with respect to the standard Gaussian measure on $\mathbb{R}$, the relation (\ref{1})
holds true. 
The formula (\ref{1}) can be extended to finite-dimensional Gaussian vectors and it can be also expressed in terms of the Malliavin calculus in various ways (see e.g. Hsu \cite{Hsu} or Nourdin and Peccati \cite{Nourdin-Peccati-2012}).

Our purpose is to  prove an integration-by-parts formula that characterizes (centered) Gaussian stochastic processes.  The framework is to view the stochastic processes as random paths on $\L^2=\L^2([0,1])$, and to show that the law $\P=\P^X$ of the co-ordinate process $X$ satisfies a certain integration-by-parts formula depending on a covariance function $R$ if and only if under $\P$ it is a centered Gaussian process with the covariance function $R$. 
The line of attack is to use the Fredholm representation of $\L^2$-valued Gaussian processes provided in \cite{Sottinen-Viitasaari-2015} and \cite{Sottinen-Viitasaari-2014-preprint}.  

On related research we mention Barbour \cite{Barbour-1990}, Coutin and Decreusefond \cite{Cou-Dec}, Kuo and Lee \cite{Kuo-Lee-2011}, Shih \cite{Shih-2011}, and Sun and Guo \cite{Sun-Guo-2015}. In particular, we note that Theorem 3.1 of Shih \cite{Shih-2011} characterizes Gaussian measures on Banach spaces via the following integration-by-parts formula (the formulation uses the machinery of abstract Wiener spaces $(i,\CM,\B)$, where $i\colon\CM\to\B$ is the canonical embedding and $\CM$ is the Cameron--Martin space of an $\B$-valued Gaussian random variable): 
Let $X$ be a $\B$-valued random variable. Then $\P$ is Gaussian if and only if
$$
\E\left[{\la X, \D f(X)\ra}_{\B,\B^*}\right] = \E\left[{\Tr}_\CM \D^2 f(X)\right]
$$ 
for all $f:\B\to\mathbb{R}$ such that $\D^2f(X)$ is trace-class on $\CM$, where $\D$ denotes the Gross derivative. In Section \ref{sect:discussion}, we will discuss the connection between our integration-by-parts formula  \eqref{eq:ibp-gfp} and results in Shih \cite{Shih-2011}.  In particular, we will show than if one works on the smaller space of continuous functions on $[0,1]$ vanishing at the origin, then  our formula and  Shih's results are different. On the other hand,  our approach  does not use the Cameron--Martin space  and the abstract Wiener space structure. Instead, we work solely on the fixed path-space $\L^2=\L^2([0,1])$  and this makes somehow our characterization of Gaussian measures simpler.

The rest of the paper is organized as follows. In Section \ref{sec:preliminaries} we recall some preliminaries including operators on $\L^2$ spaces and associated Gaussian Fredholm processes. We also define ``pathwise'' Malliavin derivative that is crucial for our results. Indeed, as the underlying processes in Section \ref{sec:IBP} are not Gaussian \emph{a priori}, the Malliavin derivative cannot be defined in a traditional sense using Gaussian spaces. We formulate and prove our integration-by-parts characterizations in Section \ref{sec:IBP}. We also provide several examples including characterizations for Brownian motion and Brownian bridges. We end the paper with a short discussion in Section \ref{sect:discussion} on the links between our findings and the characterization of Gaussian processes in Shih \cite{Shih-2011}.
\section{Preliminaries}
\label{sec:preliminaries}
We recall some necessary preliminaries in order to prove our results. 
We begin by considering kernels and operators in $\L^2=\L^2([0,1])$ which are then connected with $\L^2$-valued centered Gaussian processes via the so-called Fredholm representation. For details of the facts and constructions related to Fredholm processes, we refer to \cite{Sottinen-Viitasaari-2015}, \cite{Sottinen-Viitasaari-2014-preprint}, and \cite{Sottinen-Yazigi-2014}.

\subsection{Some Operators and Kernels}

Let $\1_A$ denote the indicator of a set $A$.  We use the short-hand $\1_t = \1_{[0,t)}$, which is related with the interpretation $\int_a^b = \int_{[a,b)}$. 
Recall that $\L^2=\L^2([0,1])$.  We use the identification $\L^2\times\L^2 = \L^2([0,1]^2)$. We denote by $\mathcal{E}\subset\L^2$ the set of right-continuous step-functions. 

\begin{dfn}[Associated Operator]\label{dfn:T}
For a kernel $T\in\L^2\times\L^2$ its \emph{associated operator} $\T\colon\L^2\to\L^2$ is defined as
$$
\T f(t) = \int_0^1 f(s)T(t,s)\, \d s.
$$
\end{dfn}

We note that the associated operator $\T\colon\L^2\to\L^2$ is bounded.  Indeed, by the Cauchy--Schwarz inequality
\begin{eqnarray*}
{\|\T f\|}_{\L^2}^2 &=&
\int_0^1\left(\int_0^1 f(s) T(t,s)\, \d s\right)^2\, \d t \\
&\le& 
\int_0^1 \left(\int_0^1 f(s)^2\, \d s\right)\left(\int_0^1 T(t,s)^2\, \d s\right)\,\d t \\
&=& \int_0^1 f(s)^2\, \d s\, \int_0^1\!\!\!\int_0^1 T(t,s)^2 \, \d s \d t\\
&=& {\|f\|}_{\L^2}^2 {\| T \|}_{\L^2\times\L^2}^2.
\end{eqnarray*}

\begin{exa}\label{exa:indicator_kernel}
For the indicator kernel
$
I(t,s) = \1_t(s)
$
the associated operator is just the definite integral
$$
\I f(t) = \int_0^t f(s)\, \d s.
$$
\end{exa}

\begin{dfn}[Associated Adjoint Operator]
For a kernel $T\in\L^2\times\L^2$ its \emph{associated adjoint operator} $\T^*$ is defined by extending linearly the relations
$$
\T^*\1_t(s) = T(t,s), \quad t\in[0,1].
$$ 
The domain of $\T^*$ is the Hilbert space $\dom(\T^*)$ that is generated by the indicators $\1_t$, $t\in[0,1]$, and closed under the inner product
$$
{\la \1_t, \1_s \ra}_{\dom(\T^*)} = \int_0^1 T(t,u)T(s,u)\, \d u.
$$
\end{dfn}

We note that $\dom(\T^*)$ may not be a function space in general. Also, we note that, by construction, $\T^*$ is an isometry to a subspace of $\L^2$ and
$$
{\la f, g \ra}_{\dom(\T^*)}
=
{\la \T^* f, \T^* g\ra }_{\L^2}. 
$$

The following result gives more understanding on the operator $\T^*$ by considering the case where the kernel $T$ is continuous and of bounded variation in its first argument, and $T(0,\cdot)\equiv 0$.  In relation to Fredholm Gaussian processes $X$, the condition $T(0,\cdot)\equiv 0$ simply means that $X_0\equiv 0$, if $T$ is the Fredholm kernel of $X$.

\begin{lmm}\label{lmm:bv}
If the kernel $T$ is left-continuous and of bounded variation in its first argument and $T(0,\cdot)\equiv 0$, then 
\begin{equation}\label{eq:bv}
\T^*f(t) = \int_0^1 f(s)\, T(\d s, t),
\end{equation}
for all $f\in\mathcal{E}$, and $\T^*$ is the adjoint of $\T$ in the sense that 
\begin{equation}\label{eq:adjoint}
\int_0^1 \T^*f(t)\,  g(t)\, \d t 
=
\int_0^1 f(t)\,  \T g(\d t).
\end{equation}
for all $f\in\mathcal{E}$ and $g\in\L^2$.
\end{lmm}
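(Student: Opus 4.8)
The plan is to establish \eqref{eq:bv} first on indicators and then extend by linearity to $\mathcal{E}$, and to derive \eqref{eq:adjoint} from \eqref{eq:bv} by a Fubini argument combined with the defining isometry of $\T^*$. For \eqref{eq:bv}, fix $t$ and take $f=\1_u$. By definition $\T^*\1_u(t)=T(u,t)$, so I must check that $\int_0^1 \1_u(s)\,T(\d s,t)=\int_0^u T(\d s,t)=T(u,t)-T(0,t)=T(u,t)$, where the last equality uses the hypothesis $T(0,\cdot)\equiv 0$ and the middle one is just the fundamental theorem of calculus for the Lebesgue--Stieltjes measure $T(\d s,t)$, which is well-defined since $T$ is of bounded variation in its first argument; left-continuity in the first argument is what guarantees $\int_0^u T(\d s,t)$ evaluates to $T(u,t)$ rather than to a one-sided limit. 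Linearity then gives \eqref{eq:bv} for all $f\in\mathcal{E}$.

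For \eqref{eq:adjoint}, I would again first treat $f=\1_u$ and $g=\1_v$ (step functions of this form span a dense subset of $\L^2$ on the right-hand side, and $\mathcal{E}$ on the left). The left-hand side becomes $\int_0^1 T(u,t)\1_v(t)\,\d t=\int_0^v T(u,t)\,\d t$. For the right-hand side, $\T g(t)=\T\1_v(t)=\int_0^1\1_v(s)T(t,s)\,\d s=\int_0^v T(t,s)\,\d s$, a function of $t$ whose Lebesgue--Stieltjes measure $\T\1_v(\d t)$ can be computed; integrating $\1_u$ against it gives $\int_0^u \T\1_v(\d t)=\T\1_v(u)-\T\1_v(0)=\int_0^v T(u,s)\,\d s$, using $T(0,\cdot)\equiv 0$ again to kill the boundary term. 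The two sides agree, and bilinearity plus a density/continuity argument (both sides are continuous in $g\in\L^2$, using boundedness of $\T$ from Definition~\ref{dfn:T} and the bounded-variation control on the Stieltjes integral) extends the identity to all $f\in\mathcal{E}$, $g\in\L^2$.

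The main obstacle I anticipate is justifying the manipulations involving the Lebesgue--Stieltjes measure $\T g(\d t)$ for general $g\in\L^2$: one must argue that $t\mapsto\T g(t)=\int_0^1 g(s)T(t,s)\,\d s$ is itself of bounded variation (so that $\T g(\d t)$ makes sense), which should follow from $T(\cdot,s)$ being of bounded variation with a variation bound integrable in $s$, and then exchange an integral in $s$ with the Stieltjes integral in $t$ — a Fubini-type interchange that needs the total-variation measure to be $\sigma$-finite and the integrand jointly integrable. A secondary subtlety is the passage from step functions to general $g\in\L^2$ on the right-hand side of \eqref{eq:adjoint}: I would control $\bigl|\int_0^1 f(t)\,\T g(\d t)\bigr|$ by $\|f\|_\infty$ times the total variation of $\T g$, and bound the latter linearly in $\|g\|_{\L^2}$, so that both sides of \eqref{eq:adjoint} are continuous linear functionals of $g$ agreeing on a dense set. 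Once these measure-theoretic points are handled, the rest is the routine bookkeeping sketched above.
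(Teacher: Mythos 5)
Your proof of \eqref{eq:bv} is exactly the paper's: check on indicators $\1_u$ via the fundamental theorem of calculus for the Stieltjes measure $T(\d s,t)$, using left-continuity and $T(0,\cdot)\equiv 0$ to identify $\int_0^u T(\d s,t)$ with $T(u,t)$, then extend by linearity. For \eqref{eq:adjoint} the core idea is also the same (a Fubini interchange of the Lebesgue integral in $t$ with the Stieltjes integral in $s$), but the execution differs: the paper applies Fubini directly to arbitrary $f\in\mathcal{E}$ and $g\in\L^2$, reading off $\int_0^1 T(\d t,s)g(s)\,\d s = \T g(\d t)$ in one line, whereas you restrict to $g=\1_v$ and then invoke bilinearity plus density in $g$. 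Your detour buys nothing and costs something: the continuity-in-$g$ estimate you need (total variation of $\T g$ bounded linearly by $\|g\|_{\L^2}$) requires an integrability condition on $s\mapsto \mathrm{Var}(T(\cdot,s))$ that the hypotheses do not explicitly supply, while the direct Fubini route only needs the joint integrability you already flag as the main obstacle. (To be fair, the paper is itself silent on that Fubini justification, so your proposal is no less rigorous than the published argument --- just less economical on the second identity.)
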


\begin{proof}
For \eqref{eq:bv} it is enough to show that its right-hand-side is $T(t,s)$ for $f(s)=\1_t(s)$. But  this is straightforward:
\begin{eqnarray*}
T(t,s) &= T(t-,s) - T(0,s) 
&=
\int_0^t T(\d u, s) \\
&=
\int_0^1 \1_t(u)\, T(\d u, s).
\end{eqnarray*}

Let us then show \eqref{eq:adjoint}. By the Fubini theorem
\begin{eqnarray*}
\int_0^1 \T^*f(t)\,  g(t)\, \d t 
&=&
\int_0^1\int_0^1 f(s)T(\d s, t)\, g(t)\, \d t \\
&=&
\int_0^1 f(t)\int_0^1T(\d t, s)\,g(s)\, \d s  \\
&=&
\int_0^1 f(t)\,  \T g(\d t),
\end{eqnarray*}
which proves the claim.
\end{proof}

\begin{exa}\label{exa:indicator_adjoint}
For the integral operator $\I$ we have
$$
\I^* f(t) = \int_0^1 f(s)\,\delta_t(\d s),
$$
where $\delta_t$ is the unit mass at point $t$. In other words, $\I^*$ is the identity operator. This also provides an example where \eqref{eq:bv} holds provided that $T(\d s,t)$ is understood as a measure. 
\end{exa}
\begin{rem}{\rm
If the kernel $T$ is Volterra type (i.e. $T(t,s)=0$ if $s>t$) and of bounded variation in its first argument, then the operator $\T^*$ coincides with the adjoint operator of Lemma 1 of Al\`os at al. \cite{Alos-Mazet-Nualart-2001}. 
}
\end{rem}
\subsection{Gaussian Fredholm Processes}\label{fredholm}

Let $(\Omega,\F,\P)$ be a probability space.  For concreteness, we assume that $\Omega=\L^2$, $\F$ is the associated Borel $\sigma$-field and $\P$ is the probability measure of the co-ordinate process $X_t(\omega)=\omega(t)$. The following result can be found from \cite{Sottinen-Viitasaari-2014-preprint} although here we present the statement in a slightly different form. 

\begin{lmm}[Fredholm Representation]\label{lmm:fredholm}
Suppose $X=(X_t)_{t\in[0,1]}$ is a centered process with covariance $R$. Then $X$ takes values in $\L^2$ if the covariance operator $\R$ is trace-class, i.e.,
\begin{equation}\label{eq:trace-assumption}
\int_0^1 R(t,t)\, \d t < \infty.
\end{equation}
In this case the square root $\K$ of the covariance $\R$ admits a \emph{Fredholm kernel}, i.e.,
$$
R(t,s) = \int_0^1 K(t,u) K(s,u)\, \d u.
$$
The kernel $K\in \L^2\times\L^2$ can be assumed to be positive symmetric, and in this case it is unique. Consequently, we have the \emph{Fredholm representation} for Gaussian processes with trace-class covariance operator $\R$:
\begin{equation}\label{eq:fredholm}
X_t = \int_0^1 K(t,s)\, \d W_s
\end{equation}
for some Brownian motion.  
\end{lmm}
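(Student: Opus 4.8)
The plan is to obtain all three assertions — that $X$ is $\L^2$-valued, that the positive square root of the covariance operator has a unique positive symmetric Fredholm kernel, and the representation \eqref{eq:fredholm} — from the spectral analysis of the covariance operator $\R$, i.e.\ the associated operator (Definition \ref{dfn:T}) of the covariance kernel $R$. The $\L^2$-valuedness is the easy part: for a jointly measurable version of $X$, Tonelli's theorem gives $\E\big[{\|X\|}_{\L^2}^2\big] = \int_0^1 \E[X_t^2]\,\d t = \int_0^1 R(t,t)\,\d t$, so \eqref{eq:trace-assumption} forces ${\|X\|}_{\L^2}<\infty$ almost surely.

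For the kernel I would argue as follows. The operator $\R$ is bounded (by the computation following Definition \ref{dfn:T}), self-adjoint and positive semi-definite because $R$ is a covariance, and \eqref{eq:trace-assumption} says precisely that it is trace-class with $\Tr\R = \int_0^1 R(t,t)\,\d t$. Its positive square root $\K:=\R^{1/2}$ therefore exists by the functional calculus, is again positive and self-adjoint, and satisfies ${\|\K\|}_{\HS}^2 = \Tr(\K^2)=\Tr\R<\infty$, so $\K$ is Hilbert--Schmidt. By the classical identification of Hilbert--Schmidt operators on $\L^2([0,1])$ with integral operators whose kernels lie in $\L^2([0,1]^2)$, $\K$ is the associated operator of some $K\in\L^2\times\L^2$, unique up to a null set; self-adjointness of $\K$ forces $K$ symmetric, ``positive'' is to be read as $\K\ge 0$, and uniqueness among positive symmetric kernels follows from uniqueness of the positive square root combined with the $\L^2$-uniqueness of Hilbert--Schmidt kernels. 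Finally $\R=\K\K^*=\K^2$ at the operator level becomes, through the composition rule for integral operators, the Fredholm identity $R(t,s)=\int_0^1 K(t,u)K(s,u)\,\d u$.

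For the representation I would diagonalize: choose an orthonormal basis $\{e_n\}$ of $\L^2$ with $\R e_n=\lambda_n e_n$, $\lambda_n\ge 0$, $\sum_n\lambda_n<\infty$, whence $K(t,\cdot)=\sum_n\sqrt{\lambda_n}\,e_n(t)\,e_n$ in $\L^2$. Now take $X$ Gaussian and set $\xi_n:=\lambda_n^{-1/2}{\la X,e_n\ra}_{\L^2}$ for $\lambda_n>0$; using the Fredholm identity one checks $\E[\xi_n\xi_m]=\delta_{nm}$, so the $\xi_n$ are i.i.d.\ standard normal. Enlarging $(\Omega,\F,\P)$ to adjoin an independent i.i.d.\ $N(0,1)$ family for the indices with $\lambda_n=0$, one obtains a complete orthonormal Gaussian sequence and defines the isonormal process $W(h):=\sum_n\xi_n{\la h,e_n\ra}_{\L^2}$, $h\in\L^2$. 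Then $W_t:=W(\1_t)$ has covariance $\sum_n{\la\1_t,e_n\ra}{\la\1_s,e_n\ra}=s\wedge t$, i.e.\ (after passing to a continuous modification) $W$ is a Brownian motion, and the Wiener integral gives $\int_0^1 K(t,s)\,\d W_s=W(K(t,\cdot))=\sum_n\sqrt{\lambda_n}\,e_n(t)\,\xi_n$.

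The main obstacle is this last identification, $\sum_n\sqrt{\lambda_n}\,e_n(t)\,\xi_n=X_t$. Its partial sums obey $\E\big[(X_t-\sum_{n\le N}\sqrt{\lambda_n}e_n(t)\xi_n)^2\big]=R(t,t)-\sum_{n\le N}\lambda_n e_n(t)^2$, so everything reduces to the Mercer-type identity $\sum_n\lambda_n e_n(t)^2=R(t,t)$: pointwise in $t$ this needs continuity of $R$, while in general one has only convergence in $L^1([0,1],\d t)$ — because $\int_0^1\sum_n\lambda_n e_n(t)^2\,\d t=\Tr\R=\int_0^1 R(t,t)\,\d t$ — which is still enough to identify $X$ and $\int_0^1 K(\cdot,s)\,\d W_s$ as $\L^2$-valued Gaussian random variables, the sense in which \eqref{eq:fredholm} is to be understood here. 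The remaining technical points, namely the existence of a jointly measurable modification of $X$ and the harmless enlargement of the probability space, I would record in a remark rather than belabor.
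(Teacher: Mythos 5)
The paper does not prove this lemma at all --- it is quoted from \cite{Sottinen-Viitasaari-2014-preprint} --- so there is no in-paper argument to compare against; your proof is correct and follows the standard route used there (trace-class covariance operator $\Rightarrow$ Hilbert--Schmidt square root $\Rightarrow$ $\L^2$ kernel, then a Karhunen--Lo\`eve expansion to build $W$). One remark on the step you flag as the ``main obstacle'': you are more cautious than necessary. Since $0\le R(t,t)-\sum_{n\le N}\lambda_n e_n(t)^2=\E\big[(X_t-S_N(t))^2\big]$ is nonincreasing in $N$ and its $\d t$-integral tends to $\int_0^1 R(t,t)\,\d t-\Tr\R=0$, monotone convergence gives $\sum_n\lambda_n e_n(t)^2=R(t,t)$ for a.e.\ $t$ without any continuity of $R$; hence $X_t=\int_0^1K(t,s)\,\d W_s$ holds almost surely for Lebesgue-a.e.\ fixed $t$, not merely as an identity of $\L^2$-valued random variables. (The genuinely unavoidable caveat is only that $K$, and hence the Fredholm identity for $R$, is determined up to null sets, which is all the lemma can mean for a kernel in $\L^2\times\L^2$.)
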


\begin{rem}[Construction of Gaussian Fredholm Processes]
Lemma \ref{lmm:fredholm} can be used to construct a Gaussian process having values in $\L^2$.  Indeed, let $(e_n)_{n=1}^\infty$ be any orthonormal basis on $\L^2$, and let $(\xi_n)_{n=1}^\infty$ be i.i.d. standard Gaussian random variables.  Then a Gaussian process $X$ with Fredholm kernel $K$ can be constructed using the following $\L^2$-convergent series:
$$
X_t = \sum_{n=1}^\infty \int_0^1 \e_n(s) K(t,s)\, \d s \,\, \xi_n.
$$ 
\end{rem}

\begin{dfn}[Cameron--Martin Space]\label{dfn:cm}
The \emph{Cameron--Martin space}, or the \emph{Reproducing kernel Hilbert space}, $\CM$ of a centered Gaussian process $X$ with covariance $R$ is the Hilbert space of real-valued functions on $[0,1]$ generated by the functions $R(t,\cdot)$, $t\in[0,1]$, and the inner product
$$
{\la R(t,\cdot), R(s,\cdot) \ra}_{\CM} = R(t,s).
$$
\end{dfn}

For Gaussian Fredholm processes with representation \eqref{eq:fredholm} the implicit Definition \ref{dfn:cm} can be made completely concrete. Indeed, in this case we have $\CM = \K\L^2$ and the inner product is given by
$$
{\la f, g \ra}_{\CM} = \int_0^1 \K^{-1}f(t)\, \K^{-1}g(t)\, \d t.
$$

\begin{exa}
The Brownian motion is a Gaussian Fredholm process with Fredholm kernel $K(t,s)=\1_t(s)$. Consequently, the Cameron--Martin space of a Brownian motion is $\I\L^2$ and the inner product is
$$
{\la f, g\ra}_{\CM} = \int_0^1 f'(t) g'(t)\, \d t.
$$
\end{exa}

\begin{dfn}[Linear Space]
The \emph{linear space} $\H_1$ or the \emph{first chaos} of a centered Gaussian process $X$ is the closed subspace of $\L^2(\Omega,\sigma(X),\P)$ generated by the random variables $X_u$, $u\in[0,1]$.
\end{dfn}

\begin{dfn}[Integrand Space]\label{dfn:integrand-space}
The \emph{integrand space} $\mathcal{I}$ of a centered Gaussian process $X$ with covariance $R$ is the closure of step-functions $f\in\mathcal{E}$ under the norm induced by the inner product generated by the relation 
$$
{\la \1_t,\1_s\ra}_{\mathcal{I}} = R(t,s).
$$
\end{dfn}

In general, the Hilbert space $\mathcal{I}$ may contain distributions. Note also that $\mathcal{I} = \dom(\K^*)$.

Suppose the centered Gaussian process $X\colon\Omega\to\L^2$ is infinite-dimensional in the sense that its Cameron--Martin space $\CM$ is infinite-dimensional.  Then all the spaces $\CM$, $\H_1$ and $\mathcal{I}$ are isometric to $\L^2$. For example, $\K\colon\L^2\to\CM$ is an isometry and $\K^*\colon\mathcal{I}\to\L^2$ is an isometry.  

\begin{dfn}[Abstract Wiener Integral]\label{dfn:awi}
Let $X$ be a Gaussian Fredholm process with kernel $K$. Let $f\in\mathcal{I}$. The \emph{abstract Wiener integral} 
$$
\int_0^1 f(t)\, \d X_t
$$
is the image in $\H_1$ under the isometry built from linearly extending the mapping $\1_u \mapsto X_u$.
\end{dfn}

Finally, we note that the Fredholm representation \eqref{eq:fredholm} extends into the following \emph{transfer principle} (see \cite{Sottinen-Viitasaari-2014-preprint} for details).  

\begin{lmm}[Transfer Principle]
Let $X$ be a Gaussian Fredholm process with kernel $K$ and associated Brownian motion $W$. Then for all $f\in\mathcal{I}$  
$$
\int_0^1 f(t)\, \d X_t = \int_0^1 \K^*f(t)\, \d W_t,
$$
where the left-hand-side is an abstract Wiener integral and the right-hand-side is the classical Wiener integral.
\end{lmm}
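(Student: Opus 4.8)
The plan is to verify the asserted identity first on the indicator functions $\1_u$, $u\in[0,1]$, which form a total set in $\mathcal{I}$, and then to propagate it to all of $\mathcal{I}$ by linearity and an isometry argument. So the proof will have the usual ``check on generators, then extend by density'' shape, with the only real content sitting in the generator case.

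\textit{Step 1: the identity on indicators.} For $f=\1_u$ the left-hand side is, by Definition \ref{dfn:awi}, exactly $X_u$, since the abstract Wiener integral is the isometric extension of the map $\1_u\mapsto X_u$. For the right-hand side, the definition of the associated adjoint operator gives $\K^*\1_u(s)=K(u,s)$, so that $\int_0^1\K^*\1_u(t)\,\d W_t=\int_0^1 K(u,t)\,\d W_t$, and this equals $X_u$ by the Fredholm representation \eqref{eq:fredholm}. Hence both sides coincide for $f=\1_u$, and by linearity they coincide on the whole space of step functions $\mathcal{E}$.

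\textit{Step 2: extension by isometry.} Here I would observe that both sides define isometries from $\mathcal{I}$ into $\L^2(\Omega,\P)$. On the left this is the defining property of the abstract Wiener integral. On the right the map is the composition $f\mapsto\K^*f\mapsto\int_0^1\K^*f(t)\,\d W_t$ of the isometry $\K^*\colon\mathcal{I}\to\L^2$ (recall $\mathcal{I}=\dom(\K^*)$) with the classical Wiener isometry from $\L^2$ onto the first chaos of $W$; composing isometries yields an isometry. Moreover both isometries reproduce the same bilinear form on $\mathcal{E}$, namely ${\la\1_t,\1_s\ra}=R(t,s)=\int_0^1 K(t,u)K(s,u)\,\d u$. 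Since $\mathcal{E}$ is dense in $\mathcal{I}$ by Definition \ref{dfn:integrand-space}, two continuous (indeed isometric) maps that agree on $\mathcal{E}$ agree on all of $\mathcal{I}$, which finishes the proof.

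The point that requires a little care — and which I expect to be the only genuine obstacle — is that $\mathcal{I}$ may contain genuine distributions rather than functions, so that for such $f$ neither $\int_0^1 f(t)\,\d X_t$ nor $\int_0^1\K^*f(t)\,\d W_t$ is an integral in any elementary sense. This is resolved by reading both objects as the $\L^2(\Omega)$-limits of the corresponding integrals of step-function approximants of $f$; the limits exist precisely because $\K^*$ maps $\mathcal{I}$ isometrically into $\L^2$ and the Wiener integral is $\L^2$-continuous, so the passage to the limit in Step 2 is legitimate and no estimate beyond this isometry bookkeeping is needed.
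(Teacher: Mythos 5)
Your argument is correct and is exactly the standard proof of the transfer principle (the paper itself does not reprint a proof but defers to the cited reference, where the same two-step argument is used): on an indicator $\1_u$ the left side is $X_u$ by the definition of the abstract Wiener integral, while $\K^*\1_u(\cdot)=K(u,\cdot)$ makes the right side equal to $X_u$ by the Fredholm representation, and both sides are isometries of $\mathcal{I}$ into $L^2(\Omega)$ (the right side being the composition of the isometry $\K^*\colon\mathcal{I}\to\L^2$ with the classical Wiener isometry), so agreement on the dense set $\mathcal{E}$ propagates to all of $\mathcal{I}$. Your closing remark about $\mathcal{I}$ possibly containing distributions is handled correctly: both objects are by construction $L^2(\Omega)$-limits along step-function approximations, so nothing beyond the isometry bookkeeping is needed.
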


Finally, we note that by taking $(e_n)_{n=1}^{\infty}$ to be an orthonormal basis of $\L^2$, one can construct orthonormal bases for $\CM$, $\H_1$ and $\mathcal{I}$.  For example, $(\K e_n)_{n=1}^\infty$ is an orthonormal basis on $\CM$.


\subsection{Classical Malliavin Differentiation}\label{sec:classical-malliavin}
We recall briefly the essential elements of Malliavin calculus. For further details, see Nualart \cite{Nualart-2006}, Nualart and Nualart \cite{N-N-MalliavinCalculus}, and Nourdin and Peccati \cite{Nourdin-Peccati-2012}.

\subsubsection{Isonormal Processes and Wiener-It\^{o} Chaos Expansion}\label{ss:isonormal}
Let $ \HH$ be a real separable Hilbert space. 
For any $q\in\mathbb{N}$, we denote by $ \HH^{\otimes q}$ and $ \HH^{\odot q}$, respectively, the $q$th tensor power and the $q$th symmetric tensor power of $ \HH$. We also set by convention
$ \HH^{\otimes 0} =  \HH^{\odot 0} =\mathbb{R}$. 

\begin{rem}
If $\HH = \L^2(A,\mathcal{A}, \mu) =\L^2(\mu)$, where $\mu$ is a $\sigma$-finite
and non-atomic measure on the measurable space $(A,\mathcal{A})$, then $ \HH^{\otimes q} = \L^2(A^q,\mathcal{A}^q,\mu^q)=L^2(\mu^q)$, and $ \HH^{\odot q} = \L_s^2(A^q,\mathcal{A}^q,\mu^q) = \L_s^2(\mu^q)$, 
where $\L_s^2(\mu^q)$ stands for the subspace of $\L^2(\mu^q)$ composed of those functions that are $\mu^q$-almost everywhere symmetric. 
\end{rem}

We denote by $W=\{W(h) \,;\, h\in  \HH\}$
the {\it isonormal Gaussian process} over $ \HH$. This means that $W$ is a centered Gaussian family, defined on some probability space $(\Omega ,\mathcal{F},\P)$, with a covariance structure given by the relation
$\E\left[ W(h)W(g)\right] ={\langle h,g\rangle}_{ \HH}$. We also assume that $\mathcal{F}=\sigma(W)$, that is, $\mathcal{F}$ is generated by $W$, and use the shorthand notation $\L^2(\Omega) = \L^2(\Omega, \mathcal{F}, \P)$.

\begin{rem}
The isonormal Gaussian process can be constructed from a centered Gaussian process as follows.  Let $X$ be a centered Gaussian process with covariance $R$ and associated integrand space $\mathcal{I}$.  Then $W(h) = \int_0^1 h(t)\, \d X_t$, $h\in\mathcal{I}$, is the isonormal Gaussian process with $\mathcal{H}=\mathcal{I}$. In particular, the Brownian motion corresponds to the isonormal Gaussian process with $\HH=\L^2$.
\end{rem}

For $q\geq 1$, let $\mathcal{H}_{q}$ be the {\it $q$th chaos} of $W$, defined as the closed linear subspace of $\L^2(\Omega)$
generated by the family $\{H_{q}(W(h))\,;\, h\in  \HH,\left\| h\right\| _{ \HH}=1\}$, where $H_{q}$ is the $q$th Hermite polynomial, defined as
$$
H_q(x) = (-1)^q \e^{\frac{x^2}{2}}
\frac{\d^q}{\d x^q} \left[ \e^{-\frac{x^2}{2}} \right].
$$
We write by convention $\mathcal{H}_{0} = \mathbb{R}$. The mapping 
$$
\mathbf{I}_{q}(h^{\otimes q})=H_{q}(W(h))
$$ 
can be extended to a
linear isometry between the symmetric tensor product $ \HH^{\odot q}$
(equipped with the modified norm $\sqrt{q!}{\left\| \,\cdot\, \right\|}_{ \HH^{\otimes q}}$)
and the $q$th Wiener chaos $\mathcal{H}_{q}$. For $q=0$, we write by convention $\mathbf{I}_{0}(c)=c$, $c\in\mathbb{R}$.

It is well-known that $\L^2(\Omega)$ can be decomposed into the infinite orthogonal sum of the spaces $\mathcal{H}_{q}$: any square-integrable random variable
$F\in\L^2(\Omega)$ admits the following {\it Wiener-It\^{o} chaotic expansion}
$$
F=\sum_{q=0}^{\infty }\mathbf{J}_q(F),  \label{E}
$$
where the series converges in $\mathcal{L}^2(\Omega)$ and $\mathbf{J}_{q}$ is the
orthogonal projection operator on the $q$th chaos $\mathcal{H}_q$. 

\subsubsection{Malliavin Operators}\label{ss:mall}

We  briefly introduce some basic elements of the Malliavin calculus with respect
to the isonormal Gaussian process $W$. 

Let $\mathcal{S}$ be the set of all
cylindrical random variables of the form
$$
F=g\left( W(\phi _{1}),\ldots ,W(\phi _{n})\right) ,  \label{v3}
$$
where $n\geq 1$, $g\colon\mathbb{R}^{n}\rightarrow \mathbb{R}$ is an infinitely
differentiable function such that it and all its partial derivatives have at most polynomial growth, and $\phi _{i}\in  \HH$,
$i=1,\ldots,n$.
The {\it Malliavin derivative}  of $F$ with respect to $W$ is the element of $\L^2(\Omega ;\HH)$ defined as
\begin{equation*}
\bD F\;=\;\sum_{i=1}^{n}\frac{\partial g}{\partial x_{i}}\left( W(\phi_{1}),\ldots ,W(\phi _{n})\right) \phi _{i}.
\end{equation*}
In particular, $\bD W(h)=h$ for every $h\in  \HH$. 
By iteration, one can define the $m$th order derivative $\bD ^{m}F$, which is an element of $\L^2(\Omega ; \HH^{\odot m})$
for every $m\geq 2$.
For $m\geq 1$ and $p\geq 1$, let ${\mathbb{D}}^{m,p}$ denote the closure of
$\mathcal{S}$ with respect to the norm ${\Vert \cdot \Vert}_{m,p}$, defined by
the relation
\begin{equation*}
{\Vert F\Vert}_{m,p}^{p}\;=\;\E\left[ {|F|}^{p}\right] +\sum_{i=1}^{m}\E\left[
{\Vert \bD ^{i}F\Vert}_{ \HH^{\otimes i}}^{p}\right].
\end{equation*}
By Proposition 1.2.1 of Nualart \cite{Nualart-2006} and the following discussion there, the (iterative) Malliavin derivatives $\bD^m$ are closable, and can thus be extended to the spaces $\mathbb{D}^{m,p}$ for any $p\ge 1$.

The Malliavin derivative $\bD$ obeys the following chain rule: If
$\varphi\colon\mathbb{R}^{n}\rightarrow \mathbb{R}$ is continuously
differentiable with bounded partial derivatives and if $F=(F_{1},\ldots
,F_{n})$ is a vector of elements in ${\mathbb{D}}^{1,2}$, then $\varphi
(F)\in {\mathbb{D}}^{1,2}$ and
$$
\bD\,\varphi (F)=\sum_{i=1}^{n}\frac{\partial \varphi }{\partial x_{i}}(F)\bD F_{i}.
$$

The operator $\mathrm{L}$, defined as 
$$
\mathrm{L}=\sum_{q=0}^{\infty }-qJ_{q},
$$
is the {\it infinitesimal generator of the Ornstein-Uhlenbeck semigroup}. The domain of $\mathrm{L}$ is
\begin{equation*}
\dom(\mathrm{L})=\left\{F\in L^2(\Omega )\,;\,\sum_{q=1}^{\infty }q^{2}{\left\|
J_{q}F\right\|}_{\L^2(\Omega )}^{2}<\infty \right\}=\mathbb{D}^{2,2}.
\end{equation*}
For $F \in\L^2(\Omega )$, we define 
$$
\mathrm{L}^{-1}F =\sum_{q=1}^{\infty }-\frac{1}{q} J_{q}(F). 
$$
The operator $\mathrm{L}^{-1}$ is called the
\emph{pseudo-inverse} of $\mathrm{L}$. Indeed, for any $F \in\L^2(\Omega )$, we have $\mathrm{L}^{-1} F \in  \dom{\mathrm{L}} = \mathbb{D}^{2,2}$,
and
$$
\mathrm{L}\mathrm{L}^{-1} F = F - \E[F].
$$

The following integration-by-parts formula can be found e.g. in Nourdin and Peccati \cite{Nourdin-Peccati-2012}, Theorem 2.9.1.

\begin{pro}\label{pro : Tech1}
Suppose that $F\in\mathbb{D}^{1,2}$ and $G\in L^2(\Omega)$. Then, $\mathrm{L}^{-1}G \in \mathbb{D}^{2,2}$ and
$$
\E[FG] = \E[F]\E[G]+\E\left[{\langle \bD F,-\bD \mathrm{L}^{-1}G\rangle}_{\HH}\right].
$$
\end{pro}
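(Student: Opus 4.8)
The identity is the standard Malliavin integration-by-parts formula (indeed it is Theorem~2.9.1 of Nourdin--Peccati \cite{Nourdin-Peccati-2012}, as already noted), and the cleanest route goes through the divergence operator. Write $\delta$ for the divergence, i.e.\ the adjoint of the Malliavin derivative $\bD$, characterised by the duality relation $\E[{\la \bD F, u\ra}_{\HH}] = \E[F\,\delta(u)]$, valid for $F\in\mathbb{D}^{1,2}$ and $u\in\dom(\delta)$. The plan rests on the standard operator identity $\mathrm{L}=-\delta\bD$ on $\dom(\mathrm{L})=\mathbb{D}^{2,2}$ (checked chaos by chaos, since $\delta\bD$ acts as multiplication by $q$ on the $q$-th Wiener chaos, as does $-\mathrm{L}$), together with $\mathrm{L}\mathrm{L}^{-1}F=F-\E[F]$.

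First I would record that the assertion $\mathrm{L}^{-1}G\in\mathbb{D}^{2,2}$, already contained in the statement, is immediate from the series defining $\mathrm{L}^{-1}$: its $q$-th chaos component is $-q^{-1}\mathbf{J}_qG$, so $\sum_{q\ge1}q^{2}\,\|q^{-1}\mathbf{J}_qG\|_{\L^2(\Omega)}^{2}=\|G-\E[G]\|_{\L^2(\Omega)}^{2}<\infty$, which is the membership criterion for $\mathbb{D}^{2,2}=\dom(\mathrm{L})$. Since $\mathrm{L}=-\delta\bD$ on $\dom(\mathrm{L})$, this yields $\bD\,\mathrm{L}^{-1}G\in\dom(\delta)$ together with $\delta\big(-\bD\,\mathrm{L}^{-1}G\big)=\mathrm{L}\,\mathrm{L}^{-1}G=G-\E[G]$. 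Now, as $F\in\mathbb{D}^{1,2}$ and $u:=-\bD\,\mathrm{L}^{-1}G\in\dom(\delta)$, the duality relation applies and gives
$$
\E\big[{\la \bD F,\,-\bD\,\mathrm{L}^{-1}G\ra}_{\HH}\big]
=\E\big[F\,\delta\big(-\bD\,\mathrm{L}^{-1}G\big)\big]
=\E\big[F\,(G-\E[G])\big]
=\E[FG]-\E[F]\,\E[G],
$$
which is exactly the claimed formula.

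I do not expect any genuine analytic obstacle here: the argument is pure bookkeeping with the Malliavin operators, and every step is either a definition or one of the standard relations $\bD W(h)=h$, $\delta=\bD^{*}$, $\mathrm{L}=-\delta\bD$, $\mathrm{L}\mathrm{L}^{-1}=\mathrm{Id}-\E[\,\cdot\,]$. The one place where the hypotheses are really used is the step $\bD\,\mathrm{L}^{-1}G\in\dom(\delta)$, which is precisely the content of ``$\mathrm{L}^{-1}G\in\mathbb{D}^{2,2}$'' combined with $\mathbb{D}^{2,2}=\dom(\mathrm{L})$ and $\mathrm{L}=-\delta\bD$; without it the divergence $\delta(-\bD\,\mathrm{L}^{-1}G)$ need not be defined. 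If one wishes to bypass the divergence operator, the same identity can instead be obtained by first checking it for $F=\mathbf{I}_p(f)$ and $G=\mathbf{I}_q(g)$ — where, using $\mathrm{L}^{-1}\mathbf{I}_q(g)=-q^{-1}\mathbf{I}_q(g)$, the action of $\bD$ on a single chaos, and the isometry $\E[\mathbf{I}_m(a)\mathbf{I}_n(b)]=\1_{\{m=n\}}\,m!\,{\la a,b\ra}_{\HH^{\otimes m}}$, both sides of the asserted identity reduce to $p!\,{\la f,g\ra}_{\HH^{\otimes p}}$ when $p=q\ge1$ and vanish otherwise — and then passing to general $F\in\mathbb{D}^{1,2}$, $G\in\L^2(\Omega)$ by density, using that $\mathrm{L}^{-1}$ is bounded from $\L^2(\Omega)$ into $\mathbb{D}^{2,2}$ and hence $G\mapsto\bD\,\mathrm{L}^{-1}G$ is bounded from $\L^2(\Omega)$ into $\L^2(\Omega;\HH)$.
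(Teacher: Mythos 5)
Your argument is correct: the paper states this Proposition as a known result, citing Nourdin--Peccati, Theorem 2.9.1, and gives no proof of its own, and your derivation via $\mathrm{L}=-\delta\bD$, the duality $\E[{\la \bD F,u\ra}_{\HH}]=\E[F\,\delta(u)]$, and $\mathrm{L}\mathrm{L}^{-1}G=G-\E[G]$ is exactly the standard proof found in that reference. Both the divergence-based route and your alternative chaos-by-chaos verification with a density passage are sound.
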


\section{Pathwise Differentiation}
\label{sec:differentiation}
We introduce several (classical) pathwise derivatives as well as a pathwise version of the Malliavin differentiation in the path space $\L^2$ without a priori assuming an isonormal Gaussian structure as explained in Section \ref{sec:classical-malliavin}.

\begin{dfn}
Let $f\colon\L^2\to\mathbb{C}$.
\begin{description}
\item[(a)] 	\textbf{(Fr\'echet derivative)}
The \emph{Fr\'echet derivative} of $f$ at point $x\in\L^2$ is the element $\nabla f(x)\in\L^2$ such that for all $y\in\L^2$
$$
\lim_{{\|y\|}_{\L^2}\to 0}\frac{\left|f(x+y) - f(x) - {\la \nabla f(x), y \ra}_{\L^2}\right|}{{\|y\|}_{\L^2}}
=
0.
$$
\item[(b)] \textbf{(G\^ateaux derivative)}
The \emph{G\^ateaux derivative} of $f$ at point $x\in\L^2$ to direction $y\in\L^2$ is
$$
\nabla_y f(x) = \lim_{\varepsilon\to 0} \frac{f(x+\varepsilon y)-f(x)}{\varepsilon}.
$$
\item[(c)] \textbf{(Pathwise Malliavin derivative)} Let $\mathcal{C}^{\infty}_p (\mathbb{R}^n)$ denote the space of all polynomially bounded functions with polynomially bounded partial derivatives of all orders.  Consider functionals $f\colon\L^2\to\mathbb{C}$ of the form
$$
f(x) = g \left(z_1,\ldots, z_n\right),
$$
where $n \in \mathbb{N}$ and $g \in \mathcal{C}^{\infty}_p (\mathbb{R}^n)$, and 
\begin{equation}\label{eq:smooth-components}
z_k = \int_0^1 e_k(t) \, \d x(t)
\end{equation}
for some elementary functions $e_k \in \mathcal{E}$. For such $f$ we write $f\in\cS$. We call the elements of class $\cS$ the {\it smooth} functionals. The \emph{pathwise Malliavin derivative} of such $f\in\cS$ is
\begin{equation}\label{eq:MD-smooth}
\D_t f(x) = \sum_{k=1}^n \frac{\partial}{\partial z_k} g(z_1,\ldots,z_n)\,e_k(t).
\end{equation}
More generally, by iteration for every $m \in \mathbb{N}$, the pathwise Malliavin derivative of order $m$ is defined as follows: for every $t_1,...,t_m \in [0,1]$,
\begin{eqnarray*}
\lefteqn{\D^m_{t_m,...,t_1} f(x)} \\
&=& \sum_{1\le k_1,...,k_m \le n } \frac{\partial ^m}{\partial z_{k_1} \cdots \partial z_{k_m}} g (z_{k_1}, ..., z_{k_n})
\left( e_{k_1}\otimes \cdots \otimes e_{k_m}\right) (t_1,...,t_m).
\end{eqnarray*}
\end{description}
\end{dfn}
\begin{rem}
If $f$ is Fr\'echet differentiable at point $x \in \L^2$, then the G\^ateaux derivative can be written as 
$$
\nabla_y f(x) = {\la \nabla f(x),  y\ra}_{\L^2}
= \int_0^1 \nabla_t f(x)\, y(t)\, \d t. 
$$
Throughout the article the notation $\nabla$ can mean either the Fr\'echet differential or the G\^ateaux derivative, whenever confusion cannot arise.
\end{rem}
\begin{rem}
If $X$ is a Gaussian process, then our definition of the pathwise Malliavin derivative coincides with the classical one introduced in Section \ref{ss:mall} on the class of smooth functionals. In particular, it does not depend on the representation (\ref{eq:smooth-components}). For details in this case, we refer to Nualart \cite{Nualart-2006}.  
\end{rem}

\begin{rem}[Caution] 
\begin{itemize}
\item[(a)] 
In our definition of pathwise Malliavin derivative, it is not necessary to take $\L^2$ as the domain of smooth functionals.  In fact, any suitable space of functions can be realized as good integrators with respect to elementary functions. However, the $\L^2$ space can be seen as a convenient reference function space later on, since as we are going to apply our results in a setting where $x$ in \eqref{eq:smooth-components} plays the role of a typical sample path over the interval $[0,1]$ of a Gaussian process.   
\item[(b)] 
The $\mathcal{E}$-valued operator $\D$ defined in \eqref{eq:MD-smooth} is in fact a linear unbounded operator.  It is well known (cf. Nualart \cite{Nualart-2006}, Lemma 1.2.1) that the domain of the classical Malliavin derivative can be extended in $\L^2(\Omega)$ fashion, if $\P=\P^X$ is a Gaussian measure. The key part in the extension is the Gaussian integration-by-parts formula  
$$
\E\left[{\la \D F,h\ra}_\HH \right]
=
\E\left[FW(h) \right],
$$
$h\in\HH$, $h\in\cS$,
which implies the closability of the Malliavin derivative as an operator $\D\colon\L^2(\Omega)\to \L^2(\Omega;\mathcal{I})$, as shown in Nualart \cite{Nualart-2006}, Proposition 1.2.1. However, in the pathwise setting the closability of operator $\D$ is not available. On the other hand, surprisingly such requirement is not needed in order to establish our results.
\end{itemize}
\end{rem}

The next lemma relates the pathwise Malliavin derivative to the Fr\'echet derivative. A similar result can be found in Nualart and Saussereau \cite{NuaSau} for the particular case of the fractional Brownian motion.  
\begin{lmm}\label{lmm:frechet-malliavin}
Let $f\in\cS$ and $y\in\L^2$.  Then
$$
{\la \nabla f(x), \I y \ra}_{\L^2}
=
{\la \D f(x), y\ra}_{\L^2}
$$
\end{lmm}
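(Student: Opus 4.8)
The plan is to evaluate the left-hand side as a directional (Gâteaux) derivative of $f$ in the direction $\I y$, and then to observe that both sides collapse to the same finite sum over the coordinates $z_1,\dots,z_n$ defining the smooth functional $f$. Write $f(x)=g(z_1,\dots,z_n)$ with $g\in\mathcal{C}^\infty_p(\mathbb{R}^n)$ and $z_k=\int_0^1 e_k(t)\,\d x(t)$ for $e_k\in\mathcal{E}$. Since each coordinate map $x\mapsto z_k$ is linear and $g$ is smooth, $f$ is Gâteaux differentiable; reading the pairing ${\la\nabla f(x),\I y\ra}_{\L^2}$ in the Gâteaux sense (the appropriate reading here, consistent with the convention on $\nabla$, since the maps $x\mapsto z_k$ are linear but not $\L^2$-continuous), we have
$$
{\la \nabla f(x),\I y\ra}_{\L^2} \;=\; \nabla_{\I y}f(x) \;=\; \lim_{\varepsilon\to 0}\frac{f(x+\varepsilon\,\I y)-f(x)}{\varepsilon}.
$$

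First I would record how the coordinates transform under the perturbation $x\mapsto x+\varepsilon\,\I y$. By linearity of the integral against the step function $e_k$,
$$
\int_0^1 e_k(t)\,\d(x+\varepsilon\,\I y)(t) \;=\; z_k + \varepsilon\int_0^1 e_k(t)\,\d(\I y)(t).
$$
The key (elementary) point is the identity $\d(\I y)(t)=y(t)\,\d t$: because $\I y(t)=\int_0^t y(s)\,\d s$ is absolutely continuous with density $y$, the Lebesgue--Stieltjes integral against $\I y$ reduces to an ordinary $\L^2$ pairing, $\int_0^1 e_k(t)\,\d(\I y)(t)=\int_0^1 e_k(t)\,y(t)\,\d t={\la e_k,y\ra}_{\L^2}$. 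Hence $f(x+\varepsilon\,\I y)=g\big(z_1+\varepsilon{\la e_1,y\ra}_{\L^2},\dots,z_n+\varepsilon{\la e_n,y\ra}_{\L^2}\big)$, and differentiating at $\varepsilon=0$ by the chain rule for the fixed smooth function $g$ of finitely many variables gives
$$
\nabla_{\I y}f(x)=\sum_{k=1}^n \frac{\partial g}{\partial z_k}(z_1,\dots,z_n)\,{\la e_k,y\ra}_{\L^2}.
$$

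On the other hand, by the very definition of the pathwise Malliavin derivative, $\D_t f(x)=\sum_{k=1}^n \frac{\partial g}{\partial z_k}(z_1,\dots,z_n)\,e_k(t)$, so that ${\la\D f(x),y\ra}_{\L^2}=\sum_{k=1}^n \frac{\partial g}{\partial z_k}(z_1,\dots,z_n)\,{\la e_k,y\ra}_{\L^2}$ as well; comparing the two expressions yields the claimed identity.

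I expect the only genuinely delicate point to be the bookkeeping around the meaning of $\nabla f$, namely the agreement to interpret ${\la\nabla f(x),\I y\ra}_{\L^2}$ as the directional derivative $\nabla_{\I y}f(x)$ — which is legitimate precisely because $\I y$ is regular enough (continuous, with an $\L^2$ density) even though $x$ itself need not be. Once that interpretation is fixed, there is no analytic difficulty: everything rests on the single identity $\d(\I y)=y\,\d t$ together with the chain rule for $g\in\mathcal{C}^\infty_p(\mathbb{R}^n)$, and the limit passes through the finite sum trivially.
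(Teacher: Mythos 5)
Your proof is correct and follows essentially the same route as the paper's: compute the G\^ateaux derivative of $f$ in the direction $\I y$, use the identity $\d(\I y)(t)=y(t)\,\d t$ to see that each coordinate $z_k$ shifts by $\varepsilon{\la e_k,y\ra}_{\L^2}$, and apply the chain rule to match the definition of $\D f(x)$. Your explicit remark on interpreting ${\la\nabla f(x),\I y\ra}_{\L^2}$ in the G\^ateaux sense is a welcome clarification of a point the paper leaves implicit.
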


\begin{proof}
	Straightforward calculations yield, with $e_{k} \in \mathcal{E}$ and $x\in \L ^ {2}$, that
	\begin{eqnarray*}
		\lefteqn{f(x+\I y) } \\
		&=&
		g\left(\int_0^1 e_1(t)\, \d(x(t)+\I y(t))\,,\ldots,\, \int_0^1 e_n(t)\,\d (x(t)+ y(t))\right) \\
		&=&
		g\left(\int_0^1 e_1(t)\,\d x(t)+ \int_0^1 e_1(t)\,\d\I y(t)\,,\ldots,\, \int_0^1 e_n(t)\,\d x(t)+ \int_0^1 e_n(t)\,\d\I y(t)\right) \\
		&=&
		g\left(\int_0^1 e_1(t)\,\d x(t)+ \int_0^1 e_1(t)y(t)\,\d t\,,\ldots,\, \int_0^1 e_n(t)\,\d x(t)+ \int_0^1 e_n(t)y(t)\, \d t\right) \\
		&=&
		f \left(z_1 + {\la y_1,y\ra}_{\L^2}\,,\ldots,\, z_1 + {\la y_n,y\ra}_{\L^2}\right).
	\end{eqnarray*}
	Thus,
	\begin{eqnarray*}
		\lefteqn{{\la \nabla f(x), \I y \ra}_{\L^2}
		=
		\nabla_{\I y} f(x)} \\
		&=&
		\lim_{\varepsilon\to 0} \frac{f(x+\varepsilon \I y) - f(x)}{\varepsilon} \\
		&=&
		\lim_{\varepsilon\to 0} \frac{f(z_1 + \varepsilon{\la y_1,y\ra}_{\L^2}\,,\ldots,\, z_1 + \varepsilon{\la y_n,y\ra}_{\L^2}) - f(z_1,\ldots,z_n)}{\varepsilon} \\
		&=&
		\sum_{k=1}^n\frac{\partial}{\partial z_k}f(z_1,\ldots,z_n)\, {\la y_k,y\ra}_{\L^2} \\
		&=&
		{\la \D f(x), y \ra}_{\L^2}
	\end{eqnarray*}
proving the claim.
\end{proof}

We also need the following two lemmas in order to establish our novel integration-by-parts formulas in Section \ref{sec:IBP}.
\begin{lmm}\label{lmm:D=D}
Let $X=(X_t)_{t \in [0,1]}$ be a centered Gaussian process satisfying \eqref{eq:trace-assumption}, and let $\bD$ denote 
 the associated classical Malliavin derivative. Then for every $f \in \cS$ we have $f(X) \in \dom (\bD)$ and that $\D_t f (X) = \bD_t f (X)$  for every $t \in [0,1]$. More generally, for every $m \in \mathbb{N}$, and $t_1,...,t_m \in [0,1]$, it holds that, $\D_{t_m,...,t_1} f(X) \in \dom (\bD)$, and 
$$
\D_t \left(\D_{t_m,...,t_1} f(X) \right) = \D_{t,t_m,...,t_1} f(X) = \bD_{t,t_m,...,t_1} f(X)
$$
for every $t \in [0,1]$. 
\end{lmm}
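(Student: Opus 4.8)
The plan is to reduce everything to the case $m=0$, which asserts that for a smooth functional $f\in\cS$ the pathwise Malliavin derivative $\D f(X)$ agrees with the classical Malliavin derivative $\bD f(X)$; the higher-order statement then follows by the same argument applied iteratively, since each pathwise derivative $\D_{t_m,\dots,t_1}f(X)$ is again (after fixing the extra arguments) a finite linear combination of functionals of the form $g(z_1,\dots,z_n)$ with $z_k=\int_0^1 e_k(t)\,\d X_t$, i.e. an element of the same class $\cS$, with coefficients that are tensor products of the fixed elementary functions. So I would first treat $m=0$ carefully and then indicate the induction.

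For $m=0$: write $f(x)=g(z_1,\dots,z_n)$ with $z_k=\int_0^1 e_k(t)\,\d x(t)$, $e_k\in\mathcal{E}$, $g\in\mathcal{C}^\infty_p(\mathbb{R}^n)$. The first step is to identify, under the law $\P=\P^X$, the random variables $z_k(X)=\int_0^1 e_k(t)\,\d X_t$ with the abstract Wiener integrals of Definition~\ref{dfn:awi}, equivalently with $W(e_k)$ where $W$ is the isonormal Gaussian process attached to $X$ (with $\HH=\mathcal{I}$, the integrand space of $X$). Since the $e_k$ are step functions, they lie in $\mathcal{E}\subset\mathcal{I}$, so this identification is immediate from the construction of the abstract Wiener integral as the linear isometric extension of $\1_u\mapsto X_u$. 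Consequently $f(X)=g(W(e_1),\dots,W(e_n))$ is exactly a cylindrical random variable of the form appearing in the definition of the class $\mathcal{S}$ of smooth random variables for the classical Malliavin calculus (Section~\ref{ss:mall}), because $g\in\mathcal{C}^\infty_p$ has polynomially bounded derivatives of all orders. Hence $f(X)\in\dom(\bD)$ (indeed $f(X)\in\mathbb{D}^{1,2}$, and in fact in $\mathbb{D}^{k,p}$ for all $k,p$), and by the very definition of the classical Malliavin derivative on such cylindrical variables,
$$
\bD_t f(X)=\sum_{k=1}^n \frac{\partial g}{\partial z_k}\bigl(W(e_1),\dots,W(e_n)\bigr)\,e_k(t)
=\sum_{k=1}^n \frac{\partial g}{\partial z_k}\bigl(z_1,\dots,z_n\bigr)\,e_k(t)
=\D_t f(X),
$$
which is precisely \eqref{eq:MD-smooth} evaluated at $x=X$. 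This proves the base case, and in particular shows (as remarked after the definition of $\D$) that the pathwise derivative of $f(X)$ does not depend on the chosen representation of $f$.

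For general $m$: proceeding by induction on $m$, suppose the claim holds up to order $m$. Fix $t_1,\dots,t_m\in[0,1]$. By the induction hypothesis, $\D_{t_m,\dots,t_1}f(X)=\bD_{t_m,\dots,t_1}f(X)$, and the explicit formula in the definition shows this equals
$$
\sum_{1\le k_1,\dots,k_m\le n}\frac{\partial^m g}{\partial z_{k_1}\cdots\partial z_{k_m}}\bigl(z_1,\dots,z_n\bigr)\,\bigl(e_{k_1}\otimes\cdots\otimes e_{k_m}\bigr)(t_1,\dots,t_m),
$$
which, for the purpose of differentiating in the remaining variable $t$, is a finite $\mathbb{R}$-linear combination (with coefficients the fixed scalars $(e_{k_1}\otimes\cdots\otimes e_{k_m})(t_1,\dots,t_m)$) of functionals $h_{k_1,\dots,k_m}(X)$ where $h=\partial^m g/\partial z_{k_1}\cdots\partial z_{k_m}\in\mathcal{C}^\infty_p(\mathbb{R}^n)$ and the integrands $e_k$ are unchanged; hence each $h_{k_1,\dots,k_m}\in\cS$. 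Applying the base case $m=0$ to each such functional, together with the linearity of both $\D$ and $\bD$ and the closability of $\bD$ (Proposition~1.2.1 of Nualart, recalled in Section~\ref{ss:mall}, so that $\bD$ commutes with finite linear combinations and the iterated derivative is well defined), yields $\D_{t_m,\dots,t_1}f(X)\in\dom(\bD)$ and
$$
\D_t\bigl(\D_{t_m,\dots,t_1}f(X)\bigr)=\D_{t,t_m,\dots,t_1}f(X)=\bD_{t,t_m,\dots,t_1}f(X),
$$
completing the induction.

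The only genuinely nontrivial point—hence the step I expect to require the most care—is the very first one: the clean identification of the pathwise objects $\int_0^1 e_k(t)\,\d X_t$ (defined purely path-by-path, with no Gaussian structure presupposed) with the classical abstract Wiener integrals $W(e_k)$ living in the first chaos $\H_1$, so that $f(X)$ genuinely lands in the class $\mathcal{S}$ on which the classical derivative is defined. Once $X$ is assumed Gaussian with trace-class covariance, Lemma~\ref{lmm:fredholm} and Definition~\ref{dfn:awi} make this identification available, and everything else is bookkeeping: matching the two explicit formulas for $\D$ and $\bD$ on cylindrical functionals, and invoking linearity plus closability of $\bD$ to pass to iterated derivatives. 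I would also remark in passing that condition \eqref{eq:trace-assumption} is used precisely to guarantee that $X$ takes values in $\L^2$ so that the pathwise integrals $z_k$ make sense and the whole framework of Section~\ref{sec:differentiation} applies to the sample paths of $X$.
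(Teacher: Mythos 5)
Your proposal is correct and follows essentially the same route as the paper's own (much terser) proof: both rest on the observation that for $e_k\in\mathcal{E}\subset\mathcal{I}$ the pathwise integrals $\int_0^1 e_k(t)\,\d X_t$ coincide with the abstract Wiener integrals $W(e_k)$, so that $f(X)$ is a classical cylindrical functional and the two derivative formulas agree term by term. Your explicit induction for the higher-order case is just a spelled-out version of what the paper leaves implicit.
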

\begin{proof}
By Lemma \ref{lmm:fredholm} the paths of the process $X$ belong to $\L^2$. Moreover,  $\mathcal{E} \subset \mathcal{I}$, where $\mathcal{I}$ stands for the associated integrand space of Gaussian process $X$. Furthermore, for every $e \in \mathcal{E}$, the pathwise integral $ \int_{0}^{1} e(t) \d X_t$ coincides with the abstract Wiener integral of Definition \ref{dfn:awi}. Thus, the claim follows. 
\end{proof}

\begin{lmm}\label{lmm:Domain-K*}
Let $f \in \cS$, $K \in \L^2 \times \L^2$, and let $X$ have paths in $\mathcal{L}^2$. Then $f(X)$ is twice pathwise Malliavin differentiable and the mapping $\D^2_{t,\,\cdot\,} f(X)$ belongs to $\dom (\K^*)$.
\end{lmm}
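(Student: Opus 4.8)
The plan is to read both conclusions straight off the definition of the smooth class $\cS$ and the definition of $\dom(\K^*)$; there is no genuinely hard step. First I would write $f$ in its defining form $f(x)=g(z_1,\dots,z_n)$ with $g\in\mathcal{C}^{\infty}_p(\mathbb{R}^n)$ and $z_k=\int_0^1 e_k(t)\,\d x(t)$ for elementary functions $e_k\in\mathcal{E}$. Because $X$ has paths in $\L^2$, the random variables $Z_k=\int_0^1 e_k(t)\,\d X_t$ are well defined and almost surely finite, so $f(X)=g(Z_1,\dots,Z_n)$ is a genuine random variable; and since $g$ is infinitely differentiable, $f(X)$ is pathwise Malliavin differentiable of every order, in particular twice, with, by iterating \eqref{eq:MD-smooth},
\[
\D^2_{t,s} f(X)=\sum_{1\le k_1,k_2\le n}\frac{\partial^2 g}{\partial z_{k_1}\partial z_{k_2}}(Z_1,\dots,Z_n)\,e_{k_1}(s)\,e_{k_2}(t),\qquad s,t\in[0,1].
\]

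Next I would fix $t\in[0,1]$ and a sample point $\omega$ and regroup this finite sum according to the free variable $s$, obtaining
\[
\D^2_{t,\,\cdot\,}f(X)(\omega)=\sum_{k=1}^n c_k(t,\omega)\,e_k,\qquad c_k(t,\omega):=\sum_{j=1}^n\frac{\partial^2 g}{\partial z_k\partial z_j}\big(Z_1(\omega),\dots,Z_n(\omega)\big)\,e_j(t).
\]
The coefficients $c_k(t,\omega)$ are finite real numbers, so, since $\mathcal{E}$ is a linear space containing each $e_k$, this shows $\D^2_{t,\,\cdot\,}f(X)(\omega)\in\mathcal{E}$ for every $t$ and $\omega$.

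The final step is the elementary inclusion $\mathcal{E}\subset\dom(\K^*)$: a right-continuous step function is a finite linear combination of indicators $\1_{[a,b)}=\1_b-\1_a$, hence lies in the linear span of $\{\1_u:u\in[0,1]\}$, which by definition generates the Hilbert space $\dom(\K^*)$. Combining this with the previous step gives $\D^2_{t,\,\cdot\,}f(X)\in\dom(\K^*)$, and the twice-differentiability has been recorded along the way. The only points demanding care are routine bookkeeping: keeping straight which of the two subscripts in $\D^2_{t_2,t_1}$ is frozen and which is the free argument; invoking the polynomial-growth hypothesis on $g$ and its derivatives to be sure the $c_k(t,\omega)$ are finite, so that one genuinely lands in $\mathcal{E}$ rather than in some formal span; and recalling that $\dom(\K^*)$ need not be a function space, so that the assertion $\D^2_{t,\,\cdot\,}f(X)\in\dom(\K^*)$ should be understood as saying that this object is the canonical image in $\dom(\K^*)$ of an element of $\mathcal{E}$, which is exactly what is needed for $\K^*$ to act on it.
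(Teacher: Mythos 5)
Your proposal is correct and follows essentially the same route as the paper: compute $\D^2_{t,\,\cdot\,}f(X)$ explicitly from the definition of $\cS$, observe that for fixed $t$ it is a finite linear combination of the elementary functions $e_k$ and hence lies in $\mathcal{E}$, and conclude via the inclusion $\mathcal{E}\subset\dom(\K^*)$. The paper's proof is just a terser version of yours (it states the same displayed formula and then invokes $\mathcal{E}\subset\dom(\K^*)$ without the regrouping or the finiteness remarks), so your extra bookkeeping is fine but not a different argument.
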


\begin{proof}
Second order pathwise Malliavin differentiability is obvious. Indeed, we have
$$
\D^2_{t,\,\cdot\,} f(X) = \sum_{1 \le k,l\le n} \frac{\partial^2}{\partial z_k\partial z_l} g\left( \int_{0}^{1} e_1 (t) \d X_t, ..., \int_{0}^{1} e_n (t) \d X_t\right)  \left( e_k \otimes e_l\right) (t,\cdot).
$$ 
This also shows that $\D^2_{t,\cdot} f(X)\in\dom(\K^*)$, since $\mathcal{E}\subset\dom(\K^*)$.   
\end{proof}

\section{Integration-by-Parts Characterization of Gaussian Processes}
\label{sec:IBP}
We begin with the following stronger formulation of the integration-by-parts characterization.
\begin{thm}[General Gaussian Processes, Strong Version]\label{thm:ibp-gfp}
The co-ordinate process $X\colon\Omega\to\L^2$ is centered Gaussian with Fredholm kernel $K\in\mathcal{L}^2\times\mathcal{L}^2$ if and only if 
\begin{equation}\label{eq:ibp-gfp-strong}
\E\left[ X_t  \D_t f(X)\right]
=
\E\left[\int_0^1 K(t,s)\K^*\left[\D^2_{t,\,\cdot\,} f(X)\right](s)\, \d s\right]
\end{equation}
for all $t\in[0,1]$ and $f\in\cS$. 
\end{thm}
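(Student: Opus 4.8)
The plan is to prove the two implications of the equivalence separately. The direction ``$X$ centered Gaussian with Fredholm kernel $K$ $\Longrightarrow$ \eqref{eq:ibp-gfp-strong}'' will be obtained by transporting the classical Gaussian integration-by-parts formula to $\cS$ by means of Lemma~\ref{lmm:D=D} and the isometry $\K^*\colon\mathcal{I}\to\L^2$. For the converse I will feed the exponential functionals $f(x)=\exp\!\big(\i\sum_k\lambda_k x(u_k)\big)$ into \eqref{eq:ibp-gfp-strong}; this turns it into a first-order linear PDE for the characteristic functions of the finite-dimensional distributions of $X$, whose only solution is the Gaussian one.

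\emph{Necessity.} Suppose $X$ is centered Gaussian with Fredholm kernel $K$ and covariance $R(t,s)=\int_0^1 K(t,u)K(s,u)\,\d u$. Fix $f\in\cS$, $f(x)=g(z_1,\dots,z_n)$, $z_k=\int_0^1 e_k\,\d x$ with $e_k\in\mathcal{E}$, and put $Z=(Z_1,\dots,Z_n)$, $Z_k=\int_0^1 e_k\,\d X$. By Lemma~\ref{lmm:D=D} the pathwise Malliavin derivatives of $f(X)$ coincide with the classical ones taken with respect to the isonormal process $W$ over $\HH=\mathcal{I}$, for which $X_t=W(\1_t)$, $Z_k=W(e_k)$ and $\bD Z_k=e_k$. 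Applying the classical identity $\E\!\big[W(\1_t)\,G\big]=\E\!\big[{\la\bD G,\1_t\ra}_{\mathcal{I}}\big]$ (Proposition~\ref{pro : Tech1} with $G=W(\1_t)$, using $\mathrm{L}^{-1}W(\1_t)=-W(\1_t)$) to each $G=\partial_k g(Z)$, together with the chain rule $\bD\,\partial_k g(Z)=\sum_l\partial_k\partial_l g(Z)\,e_l$ and ${\la e_l,\1_t\ra}_{\mathcal{I}}={\la\K^*e_l,\K^*\1_t\ra}_{\L^2}=\int_0^1\K^*e_l(s)K(t,s)\,\d s$ (recall $\K^*\1_t(s)=K(t,s)$), one finds
\[
\E\!\big[X_t\,\D_t f(X)\big]=\sum_{k,l}e_k(t)\,\E\!\big[\partial_k\partial_l g(Z)\big]\int_0^1 K(t,s)\,\K^*e_l(s)\,\d s .
\]
On the other hand $\D^2_{t,\,\cdot\,}f(X)=\sum_{k,l}\partial_k\partial_l g(Z)\,e_k(t)\,e_l(\,\cdot\,)$ lies in $\dom(\K^*)$ by Lemma~\ref{lmm:Domain-K*}; applying $\K^*$ in the second argument and integrating against $K(t,\,\cdot\,)$ reproduces precisely the right-hand side above, which is \eqref{eq:ibp-gfp-strong}.

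\emph{Sufficiency.} Assume now that \eqref{eq:ibp-gfp-strong} holds for every $t$ and every $f\in\cS$. Taking $f(x)=x(u)$ with $u\in(0,1)$ (so $\D_s f\equiv\1_u$, $\D^2 f\equiv0$) and evaluating \eqref{eq:ibp-gfp-strong} at times $t<u$ shows that $X_t\in\L^1(\Omega)$ and $\E[X_t]=0$; letting $u\uparrow1$ this covers all $t<1$. Since $X$ is $\L^2([0,1])$-valued, it therefore suffices to prove that $(X_{u_1},\dots,X_{u_n})$ is $N\!\big(0,(R(u_i,u_j))_{i,j}\big)$ for arbitrary $0\le u_1<\dots<u_n<1$, where $R(t,s)=\int_0^1 K(t,u)K(s,u)\,\d u$. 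Fix such $u_1,\dots,u_n$, let $\lambda\in\mathbb{R}^n$, and apply \eqref{eq:ibp-gfp-strong} to the real and imaginary parts of $f(x)=\exp\!\big(\i\sum_k\lambda_k x(u_k)\big)$ (i.e.\ $g(z)=\exp(\i\la\lambda,z\ra)$, $e_k=\1_{u_k}$). Writing $\psi=\sum_k\lambda_k\1_{u_k}$ and $\Phi(\lambda)=\E\!\big[\exp(\i\sum_k\lambda_k X_{u_k})\big]$, one has $\D_t f(X)=\i\psi(t)f(X)$, $\D^2_{t,\,\cdot\,}f(X)=-\psi(t)\psi(\,\cdot\,)f(X)$, and $\int_0^1 K(t,s)\K^*\psi(s)\,\d s=\sum_k\lambda_k R(t,u_k)$, so \eqref{eq:ibp-gfp-strong} becomes
\[
\psi(t)\Big(\i\,\E\!\big[X_t f(X)\big]+\big(\textstyle\sum_k\lambda_k R(t,u_k)\big)\Phi(\lambda)\Big)=0 ,\qquad t\in[0,1].
\]
Since $\psi$ equals the constant $\lambda_m+\dots+\lambda_n$ on $[u_{m-1},u_m)$ (with $u_0=0$) and vanishes on $[u_n,1]$, for $\lambda$ outside the finite union $N$ of the hyperplanes $\{\lambda_{m+1}+\dots+\lambda_n=0\}$, $m=1,\dots,n-1$, evaluating the identity at $t=u_m$ (where $\psi(u_m)=\lambda_{m+1}+\dots+\lambda_n\neq0$) and using $\E[X_{u_m}f(X)]=-\i\,\partial_{\lambda_m}\Phi(\lambda)$ yields $\partial_{\lambda_m}\Phi(\lambda)=-\big(\sum_k\lambda_k R(u_m,u_k)\big)\Phi(\lambda)$ for $m=1,\dots,n-1$.

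The equation for $m=n$ is genuinely absent — because $\1_{u_k}(u_n)=0$ for every $k$, the coefficient $\psi(u_n)$ vanishes and \eqref{eq:ibp-gfp-strong} says nothing at $t=u_n$ — and this ``blind spot'', forced by the half-open convention $\1_t=\1_{[0,t)}$, is the only real difficulty; it is removed by computing one-dimensional marginals directly. Since $X_{u_k}\in\L^1(\Omega)$, $\Phi$ is $C^1$ on $\mathbb{R}^n$, so the displayed identities, valid a priori only on the dense set $\mathbb{R}^n\setminus N$, hold everywhere; hence $h(\lambda):=\Phi(\lambda)\exp\!\big(\tfrac12\sum_{j,k}R(u_j,u_k)\lambda_j\lambda_k\big)$ satisfies $\partial_{\lambda_m}h\equiv0$ for $m=1,\dots,n-1$ (by symmetry of $R$), i.e.\ $h(\lambda)=h(0,\dots,0,\lambda_n)$. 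Running the same computation with two points $\{a,b\}$, $a<b<1$, the $t=a$ identity gives $\partial_\mu\Phi_2(\mu,\nu)=-(\mu R(a,a)+\nu R(a,b))\Phi_2(\mu,\nu)$ for $\nu\neq0$, hence for all $\nu$ by $C^1$-continuity; specializing to $\nu=0$ gives $\tfrac{\d}{\d\mu}\E[\mathrm{e}^{\i\mu X_a}]=-\mu R(a,a)\,\E[\mathrm{e}^{\i\mu X_a}]$, so $\E[\mathrm{e}^{\i\mu X_a}]=\exp(-\tfrac12\mu^2 R(a,a))$ and every $X_a$ with $a<1$ is $N(0,R(a,a))$. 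Consequently $h(0,\dots,0,\lambda_n)=\E[\mathrm{e}^{\i\lambda_n X_{u_n}}]\exp(\tfrac12 R(u_n,u_n)\lambda_n^2)=1$, so $h\equiv1$ and $\Phi(\lambda)=\exp\!\big(-\tfrac12\sum_{j,k}R(u_j,u_k)\lambda_j\lambda_k\big)$. Thus every finite-dimensional distribution of $X$ is centered Gaussian with covariance generated by $K$, i.e.\ $X$ is centered Gaussian with Fredholm kernel $K$. The part requiring the most care is exactly this converse, and within it the bootstrap of the top marginal together with the $C^1$-extension argument needed to circumvent the blind spot at $t=u_n$; by contrast, the integrability of $X_t$ and the legitimacy of differentiating under the expectation are comparatively routine and, as indicated, follow already from the well-posedness of \eqref{eq:ibp-gfp-strong} itself.
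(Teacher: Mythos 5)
Your proof is correct. The necessity direction is essentially the paper's own argument: Proposition \ref{pro : Tech1} with $-\bD\mathrm{L}^{-1}X_t=\1_t$ and the isometry $\K^*\colon\mathcal{I}\to\L^2$, only written with the roles of $F$ and $G$ interchanged after expanding $\D_t f(X)=\sum_k e_k(t)\partial_k g(Z)$. The sufficiency direction, however, takes a genuinely different route. The paper first extracts the covariance by testing against $f=\tfrac12 X_u^2$ and then shows that each one-dimensional projection $Z=\int_0^1 e\,\d X$, $e\in\mathcal{E}$, is Gaussian by deriving the scalar ODE $\varphi'(\theta)=-c\theta\varphi(\theta)$ (implicitly finishing by Cram\'er--Wold); you instead feed in $f=\exp(\i\sum_k\lambda_kX_{u_k})$ and obtain the full system $\partial_{\lambda_m}\Phi=-\bigl(\sum_k\lambda_kR(u_m,u_k)\bigr)\Phi$ for the multivariate characteristic function, from which the covariance drops out of the closed form of $\Phi$ rather than being computed separately. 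The two computations are reparametrizations of the same identity (the paper's ODE is your system restricted to rays $\lambda=\theta a$), but your version confronts honestly a point the paper glosses over: since $\1_t=\1_{[0,t)}$, the common factor $\psi(t)$ vanishes at the top test point, so one equation of the system is genuinely unavailable. The paper's proof has the same blind spot --- its relation \eqref{eq:pre-diff} is obtained by cancelling $e(t)$ and is therefore only justified where $e(t)\neq 0$, yet it is then applied at all the partition points, including those where $e$ vanishes --- and your bootstrap (recovering the marginal of $X_{u_n}$ from a two-point computation with an auxiliary $b>u_n$, then extending across the exceptional hyperplanes by $C^1$-continuity) closes that gap cleanly. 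The price is the exclusion of $t=1$, which is harmless for the law on $\L^2$ and no worse than the paper, whose covariance identity $\E[X_tX_u]\1_u(t)=R(t,u)\1_u(t)$ likewise determines nothing at $t=1$; the remaining implicit assumptions (well-definedness of the expectations in \eqref{eq:ibp-gfp-strong}, whence $X_t\in\L^1$ and the legitimacy of differentiating under the expectation) are at the same level of rigor as the paper's.
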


\begin{rem}
If $X_0=0$ and the kernel $K$ is left-continuous and of bounded variation in its first argument, then we can 
reformulate \eqref{eq:ibp-gfp-strong} as
$$
\E\left[X_t \D_t f(X)\,\right]
=
\E\left[\int_0^1\!\!\!\int_0^1 K(t,s)\D^2_{t,u} f(X)\, K(\d u,s)\,\d s\right].
$$
\end{rem}

\begin{proof}[Proof of Theorem \ref{thm:ibp-gfp}] 

\emph{``If'' part:}\quad
Suppose the co-ordinate process $X\colon\Omega\to\L^2$ satisfies \eqref{eq:ibp-gfp-strong}. We begin by considering the covariance function of $X$, which will justify the use of the Fubini theorem later and make a tedious variance calculations unnecessary.  For this, take $ f(X) = \frac12 X_u^2$ for some $u\in[0,1]$. Then $f \in \cS$. We have $\D_t f(X) = X_u\1_u(t)$ and $\D_{t,s} f(X) = \1_u(s)\1_u(t)$.
Consequently, \eqref{eq:ibp-gfp-strong} yields
\begin{eqnarray}\nonumber
\E\left[ X_t X_u\right] \1_u(t)
&=&
\E\left[\int_0^1 K(t,s)\K^*\left[\1_u\right](s)\, \d s\right] \1_u(t) \\ \nonumber
&=&
\int_0^1 K(t,s) K(u,s)\, \d s\, \1_u(t) \\ \label{eq:cov-comp}
&=&
R(t,u)\1_u(t).
\end{eqnarray}
This shows that $X$ has the covariance function $R$ given by the Fredholm kernel $K$. In particular, we have 
$$
\E\left[X_t^2\right] = \int_0^1 K(t,s)^2\, \d s,
$$
and since $K \in\L^2\times\L^2$, we have $\int_0^1 \E\left[X_t^2\right] \, \d t < \infty$
which justifies the use of the Fubini theorem in the rest of the proof. Next we are going to show that any finite linear combination 
$$
Z = \sum_{k=1}^{n} a_k \left(  X_{t_k} - X_{t_{k-1}}\right) = \int_{0}^{1} e(t) \, \d X_t
$$ 
with $e= \sum_{k}^{n} a_k \textbf{1}_{(t_{k-1},t_k]} \in \mathcal{E}$ is a Gaussian random variable. Now, note that for every $\theta$ the complex-valued exponential functional $e^{\i \theta Z} = \cos ( \theta Z) + \i \sin (\theta Z)$ belongs to $\cS$, meaning that the real and imaginary parts both belong to $\cS$. 
Let $\varphi$ be the characteristic function of $Z$. Then
\begin{eqnarray*}
	\D_t \e^{\i\theta Z} &=& \i\theta e(t) \,\e^{\i\theta Z}, \\
	\D^2_{t,s} \e^{\i\theta Z} &=& -\theta^2 e(t)e(s)\, \e^{\i\theta Z}. 
\end{eqnarray*}
Hence $	\E\left[X_t\D_t \e^{\i\theta Z}\right] =	\i\theta e(t) \E\left[X_t \e^{\i\theta Z}\right]$. Also, by a direct application of Fubini theorem 
\begin{eqnarray*}
\lefteqn{\E\left[\int_0^1 K(t,s)\K^*\left[\D^2_{t,\,\cdot\,}\e^{\i\theta Z}\right](s)\, \d s \right]}  \nonumber \\
&=&
-\E\left[\int_0^1 K(t,s)\K^*\left[\theta^2 e(t)e(\cdot)\e^{\i\theta Z}\right](s)\, \d s \right] \nonumber \\
&=&	-\theta^2 e(t)\E\left[\int_0^1 K(t,s)\K^*\left[e(\cdot)\e^{\i\theta Z}\right](s)\, \d s \right] \nonumber \\
&=&	-\theta^2 e(t)\int_0^1 K(t,s)e^*(s)\, \d s\,\, \E\left[\e^{\i\theta Z}\right], 
\end{eqnarray*}
where we have denoted $e^* = \K^* e$. Consequently, the integration-by-parts formula \eqref{eq:ibp-gfp-strong} yields
\begin{equation}\label{eq:pre-diff}
\i\, \E\left[ X_t \e^{i\theta Z} \right]
=
-\theta \int_0^1 K(t,s)e^*(s)\, \d s \, \, \varphi(\theta).
\end{equation}
By Fubini theorem justified by the covariance computation \eqref{eq:cov-comp}, we also have
$$
\varphi'(\theta) = \E\left[\i Z \, \e^{\i\theta Z}\right].
$$
Thus we obtain by several application of \eqref{eq:pre-diff} that $\varphi'(\theta) = - c \theta \, \varphi(\theta)$,
where we have denoted 
$$
c = \int_0^1 \left( \sum_{k=1}^{n} a_k \left( K(t_k,s) - K(t_{k-1},s) \right) e^*(s) \right)  \,\d s < \infty.
$$
This implies that $\varphi(\theta) = \e^{-\frac{1}{2} c \theta^2}$, and since $\varphi$ is a characteristic function, $c>0$. Consequently, $Z$ is a centered Gaussian random variable with variance $c$. 

\emph{``Only if'' part:} Since the co-ordinate process $X\colon \Omega\to\mathcal{L}^2$ is Gaussian, we have the full power of Malliavin calculus at our disposal.
In particular, we can use Proposition \ref{pro : Tech1} with $F=\D_t f(X)$ and $G=X_t$. Since $\E[X_t]=0$, we obtain
$$
\E\left[X_t\D_tf(X)\right] =
\E\left[{\la \D^2_{t,\cdot} f(X), -\D\mathrm{L}^{-1} X_t \ra}_{\mathcal{I}}\right]
$$
But $-\D\mathrm{L}^{-1} X_t = \1_t$ and $\K^*$ is an isometry between $\mathcal{I}$ and $\mathcal{L}^2$.  Therefore, by noticing that $\K^*\1_t(s) = K(t,s)$, we obtain
\begin{eqnarray*}
\E\left[X_t\D_tf(X)\right] 
&=&
\E\left[{\la \D^2_{t,\cdot} f(X), \1_t \ra}_{\mathcal{I}}\right] \\
&=&
\E\left[{\la \K^*\D^2_{t,\cdot} f(X), \K^*\1_t \ra}_{\mathcal{L}^2}\right] \\
&=&
\E\left[\int_0^1\K^*\left[\D^2_{t,\cdot} f(X)\right](s) \K^*\1_t(s) \, \d s\right] \\
&=&
\E\left[\int_0^1\K^*\left[\D^2_{t,\cdot} f(X)\right](s) K(t,s) \, \d s\right]
\end{eqnarray*}
showing the claim.
\end{proof}

\begin{rem}
It is classical that a random variable $X \approx \mathscr{N}(0,\sigma^2)$ if and only if its characteristic function $\varphi_{X}$  satisfies $\varphi'_{X} (\theta) \approx - \sigma^2 \theta \varphi_{X}(\theta)$. The latter is equivalent to 
\begin{equation}\label{eq:cf-ode}
\E \left[   X e^{\i \theta X} \right] \approx   \i \,\sigma^2 \,  \theta \,  \E \left[   e^{\i \theta X}\right].
\end{equation}
Hence, as a direct consequence, relation \eqref{eq:cf-ode} is also equivalent to the fact that for a given diffusive (satisfying the chain rule) gradient operator $D$ on the space of random variables, it holds that  
$$
\E \left[ X D e^{\i\theta X}  \right] \approx \sigma^2  \, \E \left[ D^2 e^{\i\theta X}\right].  
$$
For example, in the setting of Theorem \ref{thm:ibp-gfp} and for a random variable $X_t$, by considering the functional $f(X) = e^{\i \theta X_t}$, one can easily infer that 
$$
\E \left[  X_t e^{\i \theta X_t }\right] = \i \, \theta \, \int_{0}^{1} K(t,s) ^2 \,\d s \, \E \left[ e^{\i \theta X_t} \right].
$$ 
This implies that $X_t \sim \mathscr{N}(0, \sigma^2)$ with $\sigma^2 = \int_{0}^{1} K(t,s) ^2 \d s$. Indeed, Theorem \ref{thm:ibp-gfp} is a functional version of the aforementioned considerations in order to capture the Gaussian structure of $X$ as a process.   
\end{rem}

If we have additional information on the co-ordinate process, then we can obtain a weaker integration-by-parts characterization. This is the topic of the next theorem.

\begin{thm}[General Gaussian Processes, Weak Version]\label{thm:ibp-gfp-weak}
	Let $K\in\L^2\times\L^2$ be a square integrable kernel. Assume that the co-ordinate process $X\colon\Omega\to\L^2$ satisfies $X \in \L^2(\d t \otimes \P)$, i.e.
	\begin{equation}\label{eq:strong_assumption}
	\int_0^1 \E \left[X_t^2\right] \d t < \infty.
	\end{equation}
	Then $X$ is centered Gaussian with the Fredholm kernel $K$ if and only if
	\begin{equation}\label{eq:ibp-gfp}
	\E\left[\int_0^1 X_t \D_t f(X)\, \d t\right]
	=
	\E\left[\int_0^1\!\!\!\int_0^1 K(t,s)\K^*\left[\D^2_{t,\,\cdot\,}f(X)\right](s)\, \d s\d t\right]
	\end{equation} 
	for all $f\in\cS$. 
\end{thm}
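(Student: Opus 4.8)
\textbf{Proof proposal for Theorem \ref{thm:ibp-gfp-weak}.}

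The plan is to reduce the weak (integrated-in-$t$) formula to the strong one of Theorem \ref{thm:ibp-gfp} as much as possible, exploiting the extra hypothesis \eqref{eq:strong_assumption} to legitimise all Fubini exchanges from the outset. For the \emph{``only if''} direction this is immediate: if $X$ is centered Gaussian with Fredholm kernel $K$, then \eqref{eq:trace-assumption} holds, the strong identity \eqref{eq:ibp-gfp-strong} is valid for every $t\in[0,1]$ and every $f\in\cS$, and integrating both sides over $t\in[0,1]$ and using Fubini (justified because $\int_0^1\E[X_t^2]\,\d t<\infty$ and $K\in\L^2\times\L^2$, so both integrands are in $\L^1(\d t\otimes\P)$) gives \eqref{eq:ibp-gfp}. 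So the content is in the \emph{``if''} direction, and here one cannot simply differentiate in $t$ to recover \eqref{eq:ibp-gfp-strong}; instead I would re-run the argument of the ``if'' part of Theorem \ref{thm:ibp-gfp} in its integrated form.

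First I would recover the covariance. Taking $f(X)=\tfrac12 X_u^2\in\cS$, so that $\D_t f(X)=X_u\1_u(t)$ and $\D^2_{t,s}f(X)=\1_u(t)\1_u(s)$, the left side of \eqref{eq:ibp-gfp} becomes $\int_0^1 \E[X_tX_u]\1_u(t)\,\d t=\int_0^u\E[X_tX_u]\,\d t$ and the right side becomes $\int_0^u\!\big(\int_0^1 K(t,s)K(u,s)\,\d s\big)\,\d t=\int_0^u R(t,u)\,\d t$. This gives $\int_0^u \E[X_tX_u]\,\d t=\int_0^u R(t,u)\,\d t$ for every $u$; differentiating in $u$ (both sides being absolutely continuous in $u$) yields $\E[X_uX_u]\cdot 0 + \int_0^u \partial_u\E[X_tX_u]\,\d t = \dots$ — more cleanly, since the identity holds for all $u$ and we may also localise by choosing test functions supported near $u$, I would instead use functionals of the form $f(X)=\tfrac12 X_u X_v$ (still in $\cS$, with $\D^2_{t,s}f(X)=\tfrac12(\1_u\otimes\1_v+\1_v\otimes\1_u)(t,s)$) to get $\int_0^1\E[X_tX_u]\1_v(t)\,\d t=\int_0^1 R(t,u)\1_v(t)\,\d t$ for all $u,v$, i.e. $\int_0^v(\E[X_tX_u]-R(t,u))\,\d t=0$ for all $u,v$, whence $\E[X_tX_u]=R(t,u)$ for a.e.\ $t$, and by a further routine argument (or by noting the symmetry and using $\1_v\otimes\1_w$-type tests) for all $t,u$. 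In any case this step reproduces \eqref{eq:cov-comp} and confirms \eqref{eq:trace-assumption}.

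Next, with the covariance in hand, I would show that every $Z=\int_0^1 e(t)\,\d X_t$ with $e\in\mathcal{E}$ is Gaussian, exactly as in Theorem \ref{thm:ibp-gfp}: plug $f(X)=\e^{\i\theta Z}\in\cS$ into \eqref{eq:ibp-gfp}, compute $\D_t\e^{\i\theta Z}=\i\theta e(t)\e^{\i\theta Z}$ and $\D^2_{t,s}\e^{\i\theta Z}=-\theta^2 e(t)e(s)\e^{\i\theta Z}$, pull the deterministic factors out, and apply Fubini (now unconditionally licit by \eqref{eq:strong_assumption}). The left side becomes $\i\theta\int_0^1 e(t)\,\E[X_t\e^{\i\theta Z}]\,\d t$; by the covariance identity and the transfer principle this equals $\i\theta\,\E\big[Z\e^{\i\theta Z}\big]=\theta\,\varphi_Z'(\theta)$ up to constants, where one uses $\E[X_t\e^{\i\theta Z}]$ paired against $e$ to produce $\E[ZG]$. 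The right side becomes $-\theta^2\big(\int_0^1\!\!\int_0^1 K(t,s)e(t)e^*(s)\,\d s\,\d t\big)\varphi_Z(\theta)=-\theta^2\|e\|_{\mathcal{I}}^2\,\varphi_Z(\theta)$ with $e^*=\K^*e$. So $\varphi_Z'(\theta)=-\sigma^2\theta\,\varphi_Z(\theta)$ with $\sigma^2=\|e^*\|_{\L^2}^2\ge 0$, forcing $\varphi_Z(\theta)=\e^{-\sigma^2\theta^2/2}$ and hence $Z\sim\mathscr N(0,\sigma^2)$. Since finite linear combinations of increments are dense enough to identify all finite-dimensional distributions, $X$ is a centered Gaussian process, and by the covariance computation its Fredholm kernel is $K$.

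\textbf{Main obstacle.} The delicate point is the passage, in the ``if'' direction, from the $t$-integrated identity back to statements about $X$ at fixed times — specifically, upgrading $\int_0^1(\E[X_tX_u]-R(t,u))\1_v(t)\,\d t=0$ (for all $u,v$) to $\E[X_tX_u]=R(t,u)$ for all $t,u$, and more importantly arranging the left-hand side of \eqref{eq:ibp-gfp} with $f(X)=\e^{\i\theta Z}$ into the form $\theta\,\varphi_Z'(\theta)$ rather than some smeared-out quantity $\int_0^1 e(t)\E[X_t\e^{\i\theta Z}]\,\d t$. The resolution is that this last integral \emph{is} exactly $\E[Z\e^{\i\theta Z}]$ once we know $\E[X_t\e^{\i\theta Z}]$ is linear-in-$e$ in the right way; but $Z$ itself is a priori only an $\L^2(\P)$ random variable defined via the abstract Wiener integral against $X$, and one must be careful that $\int_0^1 e(t)\,X_t\,\d t$ (a pathwise object) relates correctly to $Z=\int_0^1 e(t)\,\d X_t$. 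For step functions $e\in\mathcal{E}$ this is an integration-by-parts identity for the sample paths and is harmless; making it rigorous under only \eqref{eq:strong_assumption} (without Gaussianity yet) is the one place requiring genuine care, and I would handle it by working with $e=\1_{(a,b]}$ first, writing $Z=X_b-X_a$, where everything is elementary, and then extending by linearity.
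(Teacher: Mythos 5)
Your ``only if'' direction (integrate the strong identity \eqref{eq:ibp-gfp-strong} in $t$, with Fubini licensed by \eqref{eq:strong_assumption}) matches the paper. The ``if'' direction, however, has a genuine gap, and it sits exactly at the place you flagged as the ``main obstacle'': your proposed resolution does not work. When you plug $f=\e^{\i\theta Z}$ into the \emph{integrated} identity \eqref{eq:ibp-gfp}, the left-hand side is $\i\theta\,\E\bigl[\bigl(\int_0^1 e(t)X_t\,\d t\bigr)\e^{\i\theta Z}\bigr]$, and the random variable $Y=\int_0^1 e(t)X_t\,\d t$ is \emph{not} $Z=\int_0^1 e(t)\,\d X_t$: already for $e=\1_{(a,b]}$ one has $Y=\int_a^b X_t\,\d t$ while $Z=X_b-X_a$. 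There is no ``integration-by-parts identity for the sample paths'' turning one into the other (pathwise integration by parts relates $\int e\,\d X$ to $\int X\,\d e$, not to $\int eX\,\d t$), so the left side never becomes $\theta\varphi_Z'(\theta)$. The right-hand side fails symmetrically: integrating $-\theta^2 e(t)\int_0^1K(t,s)e^*(s)\,\d s$ against $\d t$ produces $\langle \K^{\top}e,\K^* e\rangle_{\L^2}$ (where $\K^\top$ integrates the kernel in its first argument), not $\|\K^*e\|_{\L^2}^2$; the collapse to $\|e\|_{\mathcal I}^2$ in Theorem \ref{thm:ibp-gfp} happens only because there one combines the pointwise identity at the finitely many points $t_k$ via the coefficients $a_k$, which reconstructs $\K^*e=\sum a_k(K(t_k,\cdot)-K(t_{k-1},\cdot))$ --- smearing against $e(t)\,\d t$ does not. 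A secondary gap of the same flavour occurs in your covariance step: $f=\tfrac12 X_uX_v$ only yields the symmetrized relation $F(u,v)+F(v,u)=0$ for $F(u,v)=\int_0^u(\E[X_tX_v]-R(t,v))\,\d t$, not $F(v,u)=0$ for each pair, and the ``$\1_v\otimes\1_w$-type tests'' you invoke are unavailable because $\D^2$ of a scalar smooth functional is automatically symmetric.

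The paper closes the gap by a different device, which is the one missing idea: fix $u\in(0,1]$ and test \eqref{eq:ibp-gfp} against functionals $f\in\cS$ that depend on the path only up to time $u$, so that $\D_tf(X)=0$ for $t>u$ and both sides of \eqref{eq:ibp-gfp} become integrals over $[0,u]$ of $\L^1(\d t)$ functions (here \eqref{eq:strong_assumption} is used). Since $u$ is arbitrary, the fundamental theorem of calculus recovers the pointwise identity \eqref{eq:ibp-gfp-strong} for Lebesgue-a.e.\ $t$ and every such $f$, and only \emph{then} does one run the characteristic-function argument of Theorem \ref{thm:ibp-gfp}, where the fixed-$t$ relation \eqref{eq:pre-diff} can be combined at the points $t_k$ to produce both $\E[\i Z\e^{\i\theta Z}]$ on the left and $-\theta^2\|\K^*e\|^2_{\L^2}\varphi(\theta)$ on the right. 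Your proposal never recovers this pointwise statement, and the integrated surrogate quantities it produces do not determine $\varphi_Z$.
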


Before proving Theorem \ref{thm:ibp-gfp-weak}, let us consider its similarities and differences to the integration-by-parts characterization of finite-dimensional Gaussian vectors. 

\begin{rem}
	A random vector $X=(X_1,\ldots, X_d)$ is centered Gaussian with covariance matrix $\R$ if and only if
	\[
	\E\left[\sum_{i=1}^d X_i \frac{\partial}{\partial x_i}f(X)\right]
	=
	\E\left[\sum_{i=1}^d\sum_{j=1}^d \R_{ij}\frac{\partial^2}{\partial x_i\partial x_j}f(X)\right]
	\]
for all smooth$f\colon\mathbb{R}^d\to\mathbb{R}$ such that the expectations above exist.
	Thus, by a simple analogy, one would guess (wrongly!) that a process is Gaussian if and only if
	\[
	\E\left[\int_0^1 X_t\D_t f(X)\, \d t\right]
	=
	\E\left[\int_0^1\!\!\!\int_0^1 R(t,s)\D^2_{t,s}f(X)\, \d s \d t\right].
	\]
	This formula is not, however, true even for the  Brownian motion.  It seems that there is no integration-by-parts formula in terms of the covariance directly and the simplest formula one can obtain is \eqref{eq:ibp-gfp} that is given in terms of the Fredholm kernel.  
\end{rem}

\begin{proof}[Proof of Theorem \ref{thm:ibp-gfp-weak}]
By \eqref{eq:strong_assumption} and the Fubini theorem, the weak integration-by-parts formula \eqref{eq:ibp-gfp} follows from the strong integration-by-parts formula \eqref{eq:ibp-gfp-strong} by integrating with respect to $t$ over the interval $[0,1]$. 

Conversely, suppose formula \eqref{eq:ibp-gfp} holds for all $f\in\mathcal{S}$. Let $u \in (0,1]$ be chosen arbitrary and take $f \in \cS$ such that $f(X)$ depends on $X$ only through its path up to time $u$. Then, by definition of pathwise Malliavin derivative, we infer that $\D_t f(X) = 0$ for all $t>u$. Consequently, using the Fubini theorem again, \eqref{eq:ibp-gfp} becomes
	\[
	\int_0^u \E\left[X_t \D_t f(X)\right]\, \d t
	=
	\int_0^u \E\left[\int_0^1 K(t,s)\K^*\left[\D^2_{t,\,\cdot\,}f(X)\right](s)\, \d s\right]\, \d t.
	\]
	Since the latter identity holds for arbitrary $u \in (0,1]$, a direct application of fundamental Theorem of calculus ensures that the formula \eqref{eq:ibp-gfp-strong} takes place for Lebesgue almost every $t$, and for every $f\in \cS$. Finally, we note that under assumption \eqref{eq:strong_assumption} the  functions $t \in [0,1] \mapsto \E\left[X_t \D_t f(X)\right]$ and $$t \in [0,1] \mapsto \E\left[\int_0^1 K(t,s)\K^*\left[\D^2_{t,\,\cdot\,}f(X)\right](s)\, \d s\right]$$ belong to $\L^1(\d t)$. Now the rest of the proof follows similar lines as the proof of Theorem \ref{thm:ibp-gfp}. 
\end{proof}



\begin{cor}[Brownian Motion]\label{pro:ibp-bm}
The co-ordinate process $W$ satisfying assumption \eqref{eq:strong_assumption} is the Brownian motion if and only if
\begin{equation}\label{eq:ibp-bm}
\E\left[\int_0^1 W_t \,\D_t f(W)\, \d t\right]
=
\E\left[\int_0^1\!\!\!\int_0^t \D^2_{t,s} f(W)\, \d s \d t\right]
\end{equation} 
for all $f\in\cS$.
\end{cor}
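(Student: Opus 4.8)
The plan is to derive Corollary~\ref{pro:ibp-bm} as the special case $K(t,s)=\1_t(s)$ of Theorem~\ref{thm:ibp-gfp-weak}. First I would recall from Example~\ref{exa:indicator_kernel} and Example~\ref{exa:indicator_adjoint} that the indicator kernel $I(t,s)=\1_t(s)$ is a square-integrable kernel (indeed ${\|I\|}_{\L^2\times\L^2}^2=\int_0^1 t\,\d t=\tfrac12<\infty$), that its associated operator is the definite integral $\I$, and that its associated adjoint operator $\I^*$ is the identity, so that $\I^*\left[\D^2_{t,\,\cdot\,}f(x)\right](s)=\D^2_{t,s}f(x)$. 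Moreover, $\int_0^1 K(t,u)K(s,u)\,\d u=\int_0^1 \1_t(u)\1_s(u)\,\d u=t\wedge s$, so the Fredholm kernel $I$ is precisely the Fredholm kernel of the Brownian motion (as already noted in the paper), and a centered Gaussian process with this Fredholm kernel is the Brownian motion.

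Next I would substitute these identifications into the weak integration-by-parts formula \eqref{eq:ibp-gfp}. The left-hand side is unchanged and reads $\E\left[\int_0^1 W_t\,\D_t f(W)\,\d t\right]$. On the right-hand side, $\K^*$ becomes $\I^*=\mathrm{Id}$, so $\K^*\left[\D^2_{t,\,\cdot\,}f(W)\right](s)=\D^2_{t,s}f(W)$, and the weight $K(t,s)=\1_t(s)$ restricts the inner $s$-integral from $[0,1]$ to $[0,t)$. Hence the double integral $\int_0^1\!\!\int_0^1 K(t,s)\K^*\left[\D^2_{t,\,\cdot\,}f(W)\right](s)\,\d s\,\d t$ collapses to $\int_0^1\!\!\int_0^t \D^2_{t,s}f(W)\,\d s\,\d t$, which is exactly the right-hand side of \eqref{eq:ibp-bm}. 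Thus \eqref{eq:ibp-gfp} for the kernel $I$ is literally \eqref{eq:ibp-bm}.

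Finally, I would invoke Theorem~\ref{thm:ibp-gfp-weak} directly: assuming $W$ satisfies \eqref{eq:strong_assumption}, the theorem says that $W$ is centered Gaussian with Fredholm kernel $I$ if and only if \eqref{eq:ibp-gfp} holds for all $f\in\cS$, which by the computation above is \eqref{eq:ibp-bm}. Since "centered Gaussian with Fredholm kernel $I$" means "centered Gaussian with covariance $R(t,s)=t\wedge s$", i.e. a Brownian motion, the corollary follows. There is essentially no obstacle here — the only point requiring the slightest care is confirming that $\mathcal{E}\subset\dom(\I^*)$ and that $\I^*$ really does act as the identity on the functions appearing in $\D^2_{t,\,\cdot\,}f(W)$, which is immediate since those are finite linear combinations of elementary functions and Example~\ref{exa:indicator_adjoint} identifies $\I^*$ as the identity operator.
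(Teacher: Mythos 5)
Your proposal is correct and follows essentially the same route as the paper: both specialize Theorem~\ref{thm:ibp-gfp-weak} to the indicator kernel $I(t,s)=\1_t(s)$, whose Fredholm covariance is $t\wedge s$, and use the fact that $\I^*$ is the identity (Example~\ref{exa:indicator_adjoint}) to collapse the right-hand side of \eqref{eq:ibp-gfp} to $\int_0^1\!\int_0^t \D^2_{t,s}f(W)\,\d s\,\d t$. Your write-up is in fact a bit more explicit than the paper's two-line argument, but there is no substantive difference.
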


\begin{proof}
The Brownian motion is a Gaussian Fredholm process with kernel $I(t,s) = \1_t(s)$.  The claim follows from this by noticing that $I(\d u,s) = \delta_s(\d u),$ (see Example \ref{exa:indicator_adjoint}), where $\delta_s$ is the unit mass at $s$.
\end{proof}


\begin{cor}[Gaussian Martingales]\label{cor:ibp-gm}
The co-ordinate process $M$ satisfying  assumption \eqref{eq:strong_assumption} is a Gaussian martingale with bracket $\la M \ra$ if and only if 
$$\label{eq:ibp-gm}
\E\left[\int_0^1 M_t \,\D_t f(M)\, \d t\right]
=
\E\left[\int_0^1\!\!\!\int_0^{t} \D^2_{t,s} f(M)\, \d\la M \ra_s\, \d t\right] 
$$ 
for all $f\in\cS$.
\end{cor}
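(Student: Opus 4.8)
The plan is to derive Corollary~\ref{cor:ibp-gm} from the weak integration-by-parts characterization of Theorem~\ref{thm:ibp-gfp-weak}, after recognizing a Gaussian martingale as a Gaussian Fredholm process with an explicit kernel. First I would record the elementary fact that a centered process $M$ is a Gaussian martingale with (deterministic, continuous, nondecreasing) bracket $\langle M\rangle$, $\langle M\rangle_0=0$, if and only if it is a centered Gaussian process with covariance $R(t,s)=\langle M\rangle_{t\wedge s}$: one direction is classical, and for the other a centered Gaussian process with this covariance has orthogonal, hence independent, increments over disjoint intervals, so $M$ is a martingale with $\E\!\left[(M_t-M_s)^2\mid\F_s\right]=\langle M\rangle_t-\langle M\rangle_s$. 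Under \eqref{eq:strong_assumption} we have $\int_0^1 R(t,t)\,\d t=\int_0^1\langle M\rangle_t\,\d t<\infty$, so $\R$ is trace-class and, by Lemma~\ref{lmm:fredholm}, $R$ has a Fredholm kernel $K\in\L^2\times\L^2$. When $\langle M\rangle$ is absolutely continuous one may take the Volterra-type kernel $K(t,s)=\1_t(s)\sqrt{\langle M\rangle'(s)}$, which, just like the Brownian kernel in Corollary~\ref{pro:ibp-bm}, is left-continuous and of bounded variation in its first argument with $K(0,\cdot)\equiv0$. By Theorem~\ref{thm:ibp-gfp-weak}, $M$ is a Gaussian martingale with bracket $\langle M\rangle$ if and only if the weak formula \eqref{eq:ibp-gfp} holds for this $K$, so everything reduces to identifying the right-hand side of \eqref{eq:ibp-gfp} with that of the claimed formula.

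For this I would invoke the Remark following Theorem~\ref{thm:ibp-gfp}: since $K$ is left-continuous of bounded variation in its first argument with $K(0,\cdot)\equiv0$, the Stieltjes measure $K(\d u,s)$ of $u\mapsto\1_u(s)\sqrt{\langle M\rangle'(s)}$ is the point mass $\sqrt{\langle M\rangle'(s)}\,\delta_s(\d u)$, and hence for every $f\in\cS$
\[
\int_0^1\!\!\!\int_0^1 K(t,s)\,\D^2_{t,u}f(M)\,K(\d u,s)\,\d s
=\int_0^1 \1_t(s)\,\langle M\rangle'(s)\,\D^2_{t,s}f(M)\,\d s
=\int_0^t \D^2_{t,s}f(M)\,\d\langle M\rangle_s .
\]
Integrating in $t$ over $[0,1]$ and taking expectations then turns \eqref{eq:ibp-gfp} into the displayed identity of the corollary, and conversely; the integrability that justifies Fubini is supplied by \eqref{eq:strong_assumption}, exactly as in the proofs of Theorems~\ref{thm:ibp-gfp} and~\ref{thm:ibp-gfp-weak}. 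Taking $\langle M\rangle_t=t$ recovers Corollary~\ref{pro:ibp-bm}.

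The only real obstacle is the passage to a general bracket. If $\langle M\rangle$ has a singular-continuous part there is no Fredholm kernel of the pointwise form above, so instead I would use the abstract kernel from Lemma~\ref{lmm:fredholm} together with the integrand space $\mathcal{I}=\dom(\K^*)$ of $M$: since $\K^*$ is an isometry into $\L^2$ with $\K^*\1_t=K(t,\cdot)$, one has $\int_0^1 K(t,s)\,\K^*[g](s)\,\d s={\la\1_t,g\ra}_{\mathcal{I}}$ for every $g\in\mathcal{E}$, and for a Gaussian martingale $\mathcal{I}$ agrees on step functions with $\L^2([0,1],\d\langle M\rangle)$ under $\1_t\mapsto\1_{[0,t)}$, because ${\la\1_a,\1_b\ra}_{\mathcal{I}}=\langle M\rangle_{a\wedge b}=\int_0^1\1_{[0,a)}\1_{[0,b)}\,\d\langle M\rangle$. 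Applying this with $g=\D^2_{t,\,\cdot\,}f(M)$, which for fixed $t$ and each fixed path is a genuine step function by the formula for the second pathwise derivative in Lemma~\ref{lmm:Domain-K*}, again produces $\int_0^t\D^2_{t,u}f(M)\,\d\langle M\rangle_u$; and since every step uses only $R(t,s)=\langle M\rangle_{t\wedge s}$ and the isometry property of $\K^*$, the particular choice of Fredholm kernel is irrelevant. (One should of course also read $\langle M\rangle$ in the statement as a prescribed continuous nondecreasing function with $\langle M\rangle_0=0$ and $\int_0^1\langle M\rangle_t\,\d t<\infty$, the latter being forced by \eqref{eq:strong_assumption} once $M$ is a Gaussian martingale with that bracket.)
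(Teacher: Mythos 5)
Your proof is correct, and it follows the same overall strategy as the paper (reduce to Theorem \ref{thm:ibp-gfp-weak} by exhibiting a Fredholm kernel realizing $R(t,s)=\la M\ra_{t\wedge s}$, then identify the right-hand side of \eqref{eq:ibp-gfp} with $\int_0^1\int_0^t\D^2_{t,s}f(M)\,\d\la M\ra_s\,\d t$), but the identification itself is carried out differently. The paper takes the time-changed indicator kernel $K(t,s)=I(\la M\ra_t,s)$, so that $K(\d u,s)=\delta_{\la M\ra^{-1}_s}(\d u)$, and finishes with the change of variables $s=\la M\ra_u$; this works for any continuous nondecreasing bracket without assuming absolute continuity, at the price of manipulating the (generalized) inverse $\la M\ra^{-1}$ and of the kernel's second argument ranging over $[0,\la M\ra_1]$ rather than $[0,1]$. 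You instead take $K(t,s)=\1_t(s)\sqrt{\la M\ra'(s)}$ in the absolutely continuous case, which makes the Remark after Theorem \ref{thm:ibp-gfp} apply verbatim and the computation immediate, and for general brackets you bypass any explicit kernel by writing $\int_0^1 K(t,s)\K^*[g](s)\,\d s={\la \1_t,g\ra}_{\mathcal{I}}$ and identifying $\mathcal{I}$ on step functions with $\L^2([0,1],\d\la M\ra)$. This last argument is a genuine addition: it covers brackets with a singular continuous part cleanly, and it makes explicit that the right-hand side of \eqref{eq:ibp-gfp} depends only on the covariance $R$ and not on the particular Fredholm factorization chosen, a point the paper's proof leaves implicit. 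Your preliminary observation that ``centered Gaussian with covariance $\la M\ra_{t\wedge s}$'' is equivalent to ``Gaussian martingale with bracket $\la M\ra$'' is also needed for the ``if'' direction and is correctly justified via independence of increments.
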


\begin{proof}
By using a time-change, we observe that Gaussian martingales are Gaussian Fredholm processes with kernel $K(t,s) = I(\la M\ra_t, s)$. Consequently, $K(\d u, s) = \delta_{{\la M \ra}^{-1}_s}(\d u)$.  Therefore,
\begin{eqnarray*}
\int_0^1 K(t,s)\K^*\left[\D^2_{t,\,\cdot\,} f(M)\right](s) \,\d s 
&=&
\int_0^{\la M \ra_t}\!\!\!\int_0^1 \D^2_{t,u} f(M)\, \delta_{{\la M \ra}^{-1}_s}(\d u)\d s\\
&=&
\int_0^{\la M \ra_t} \D^2_{t,{\la M \ra}^{-1}_s}f(M)\, \d s,
\end{eqnarray*}
from which the claim follows by making a change-of-variables.
\end{proof}

\begin{cor}[Brownian Bridge]\label{cor:ibp-bb}
The co-ordinate process $B$ satisfying assumption \eqref{eq:strong_assumption} is the Brownian bridge if and only if
\begin{eqnarray}
\E\left[\int_0^1 B_t \D_t f(B)\, \d t\right]=
\E\left[\int_0^1\!\!\!\int_0^1\!\!\!\int_0^1 \left[\1_t(s) - t\right]\D^2_{t,u} f(B)\, \left[\delta_s(u) - \d u\right]\d s \d t\right] 
\label{eq:ibp-bb}
\end{eqnarray}
for all $f\in\cS$.  
\end{cor}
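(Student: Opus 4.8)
The plan is to recognize the Brownian bridge as a Gaussian Fredholm process with an explicit kernel and then specialize Theorem~\ref{thm:ibp-gfp-weak}, together with the reformulation in the remark following Theorem~\ref{thm:ibp-gfp}, to that kernel, exactly as in the proofs of Corollaries~\ref{pro:ibp-bm} and~\ref{cor:ibp-gm}.

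First I would use the classical representation $B_t = W_t - tW_1$ of the Brownian bridge in terms of a standard Brownian motion $W$. This gives
$$
B_t = \int_0^t \d W_s - t\int_0^1 \d W_s = \int_0^1 \bigl(\1_t(s) - t\bigr)\, \d W_s,
$$
so the Fredholm kernel is $K(t,s) = \1_t(s) - t$. A one-line computation confirms that $\int_0^1 (\1_t(u)-t)(\1_s(u)-s)\, \d u = (t\wedge s) - ts = R(t,s)$, the Brownian-bridge covariance; hence $K$ is a Fredholm kernel of $B$ in the sense of Lemma~\ref{lmm:fredholm}, and since $K\in\L^2\times\L^2$ Theorem~\ref{thm:ibp-gfp-weak} applies. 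Note also that the hypothesis \eqref{eq:strong_assumption} is consistent with this identification, since for the genuine bridge $\E[B_t^2]=t(1-t)$ is bounded.

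Next I would check that $t\mapsto K(t,s) = \1_{\{t>s\}} - t$ is left-continuous, of bounded variation in its first argument, and vanishes at $t=0$, so that the reformulation of \eqref{eq:ibp-gfp-strong} in the remark after Theorem~\ref{thm:ibp-gfp} (equivalently, Lemma~\ref{lmm:bv} with the measure interpretation of Example~\ref{exa:indicator_adjoint}) is available. The corresponding signed measure in the first argument is $K(\d u, s) = \delta_s(\d u) - \d u$, the Dirac mass accounting for the unit jump of $\1_{\{u>s\}}$ at $u=s$ and the $-\d u$ part coming from $-u$. Substituting $K(t,s) = \1_t(s)-t$ and $K(\d u,s) = \delta_s(\d u) - \d u$ into the reformulated identity and integrating in $t$ over $[0,1]$ (justified by \eqref{eq:strong_assumption} and the Fubini theorem, as in the proof of Theorem~\ref{thm:ibp-gfp-weak}) yields precisely \eqref{eq:ibp-bb}; conversely, \eqref{eq:ibp-bb} is exactly \eqref{eq:ibp-gfp} for this $K$ rewritten via Lemma~\ref{lmm:bv}, so its equivalence with ``$B$ is centered Gaussian with Fredholm kernel $\1_t(s)-t$'', i.e. with ``$B$ is the Brownian bridge'', follows from Theorem~\ref{thm:ibp-gfp-weak}.

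There is no genuine obstacle here; the only points requiring care are getting the sign and orientation of the Fredholm kernel right (recalling that $\1_t(s)$ refers to the first argument exceeding the second) and reading off the decomposition of $K(\d u, s)$ into a Dirac mass plus Lebesgue measure. Everything else is an immediate citation of the already-established Theorem~\ref{thm:ibp-gfp-weak}.
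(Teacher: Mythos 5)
Your proposal is correct and follows essentially the same route as the paper: identify the Fredholm kernel $K(t,s)=\1_t(s)-t$ from the orthogonal representation $B_t = W_t - tW_1$, read off $K(\d u,s)=\delta_s(\d u)-\d u$, and invoke Theorem \ref{thm:ibp-gfp-weak}. The extra verifications you include (the covariance identity and the bounded-variation check) are fine but not needed beyond what the paper records.
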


\begin{proof}
The integration-by-parts formula \eqref{eq:ibp-bb} follows from the orthogonal representation
$
B_t = W_t - t W_1
$
of the Brownian bridge. Indeed, we have
\begin{eqnarray*}
K(t,s) &=& \1_t(s) -t, \\
K(\d u, s) &=& \delta_s(u) - \d u.
\end{eqnarray*}
\end{proof}
\begin{rem}{\rm
The Brownian bridge also admits the so-called canonical representation
\[
B_t = \int_0^t \frac{1-t}{1-s} \, \d W_t,
\]
Consequently, we have
\begin{eqnarray*}
K(t,s) &=& \frac{1-t}{1-s}\1_t(s), \\
K(\d u, s) &=&
\frac{1}{1-s}\Big[(1-u)\delta_s(\d u) + \1_{u}(s)\d u\Big].
\end{eqnarray*}
It follows that an equivalent formulation for the integration-by-parts formula \eqref{eq:ibp-bb} is
\begin{eqnarray*}\nonumber
\lefteqn{\E\left[\int_0^1 B_t \D_t f(B)\, \d t\right] }\\
&=&
\E\left[\int_0^1\!\!\!\int_0^1\!\!\!\int_0^1 \frac{1-t}{1-s}\1_t(s)\D^2_{t,u} f(B)\, \left[\frac{1-u}{1-s}\delta_s(\d u) + \frac{\1_{u}(s)}{1-s}\d u\right]\d s \d t\right] \nonumber \\ 
&=&
\E\left[\int_0^1\!\!\!\int_0^t\frac{1-t}{(1-s)^2} \int_0^1\D^2_{t,u} f(B)\, \left[(1-u)\delta_s(\d u) + \1_{u}(s)\d u\right]\d s \d t\right] \nonumber \\
&=&
\E\left[\int_0^1\!\!\!\int_0^t\frac{1-t}{(1-s)^2} \left[(1-s)\D^2_{t,s} f(B) + \int_0^s \D^2_{t,u} f(B)\,\d u\right]\d s \d t\right] \nonumber. 
\label{eq:ibp-bb2}
\end{eqnarray*}
}
\end{rem}
\begin{rem}{ \rm
Corollary \ref{cor:ibp-bb} can be further extended to generalized bridges with respect to a general class of Gaussian processes by using the representation results of \cite{Sottinen-Yazigi-2014}.
}
\end{rem}


\section{Connection with the Abstract Wiener Space Approach of \cite{Shih-2011}}\label{sect:discussion}

In this part, we will discuss the link between our results and the integration-by-parts formula of Shih \cite{Shih-2011}.
For simplicity, we consider only the case of Brownian motion.

Let us denote by $\B$ the space $\mathcal{C}_{0}([0,1])$ of continuous functions on $[0,1]$, vanishing at zero. Let $X=W=(W_{t})_{t\in [0, 1]}$ be the standard Brownian motion. Then $\K = \I$ is just the integral operator and $\I^*$ is the identity operator (see Example \ref{exa:indicator_adjoint}).  The integrand space is $\mathcal{I}=\mathcal{L}^2$ and the Cameron--Martin space is $\mathcal{R}=\I\mathcal{L}^2$.  
It is well-known that $\I\colon\mathcal{I}\to\mathcal{R}\subset\mathcal{B}$ embeds $\mathcal{I}$ densely into $\mathcal{B}$.  Consequently, $(\I, \mathcal{R},\mathcal{B})$ is an abstract Wiener space in the sense of Gross \cite{Gross-1967a}.

Now, the pathwise Malliavin derivative introduced in Section \ref{sec:differentiation} (which coincides with the standard Malliavin derivative, see Lemma \ref{lmm:D=D}) satisfies
$$
\label{31d-1}
\langle \D f(x), y\rangle _{\L ^ {2}} = \nabla _{\I y} f(x)  
$$
for every $f\in \mathcal{S}, x, y \in \L ^ {2}$.  It can be shown that $\nabla _{\I  y} f(x)$ coincides with the Gross $\mathcal{R}$-derivative of $f(x)$ at $\I y$, see \cite{Shih-2011}, page 1241 or \cite{Gross-1967a}.

In \cite{Shih-2011}, the following characterization of Gaussian measures on $\B$ was obtained: if  $X$ is  a $\B$-valued random variable, then $\P$ is a  Gaussian measure  if and only if
\begin{equation}\label{31d-3}
\E\left[{\la X, \D f(X)\ra}_{\B,\B^*}\right] = \E\left[{\Tr}_\CM \D^2 f(X)\right]
\end{equation} 
for all $f:\B\to\mathbb{R}$ such that $\D^2f(X)$ is trace-class on $\CM$. Here the notation $\langle \cdot, \cdot, \rangle _{ \B, \B ^ {\ast} } $   means the usual dual pairing and ${\Tr}_\CM \D^2 f(X)$ is the trace of the Malliavin derivative $\D ^ {2}$ (also called the Gross Laplacian). 

Let us discuss the connection between our result in Corollary \ref{pro:ibp-bm} and the above formula (\ref{31d-3}). We will formally compute the left-hand side of (\ref{31d-3}). Let $\dot{W} $ be the so-called white noise, which is formally defined as a Gaussian process with covariance $\E[\dot{W}_{t} \dot{W}_{s}] =\delta (t-s)$. Recall that for every $g \in \L ^ {2}$, integrals of the form $\int_{0} ^ {1} g(s) \dot{W}_{s}\d s $ are well-defined centered Gaussian random variables. Also recall the formula that links the dual  pairing $\B -\B ^ {\ast}$  (recall that $\B ^ {\ast}$ is the space of signed measures) to the scalar product in  $\L^ {2} $ (see e.g. \cite{Gross-1967a}, page 1241):
\begin{equation}\label{31d-4}
{\langle \I x, h \rangle}_{\B, \B ^ {\ast}} = {\langle x, \I ^ {\prime}h \rangle}_{\L ^ {2}}, 
\end{equation}
for any $x\in \L ^ {2}$ and $h \in \B ^{\ast} $, where $\I ^ {\prime} $ is the injection from $\B ^ {\ast} $ into $( \L ^ {2} ) ^ {\ast}\simeq \L ^ {2}$ given by (see e.g. \cite{Ustu}, Chapter 1)
$$
\I ^ {\prime} h (t)= \int_{t} ^ {1} h(\d s).
$$
Using (\ref{31d-4}), the left-hand side of (\ref{31d-3}) can be expressed as follows: by setting $x=\dot W$ and $h(\d u)= \d\D_{u} f(W)$, we obtain
\begin{eqnarray*}
\E\left[\langle W, \D f(W) \rangle_{\B, \B ^ {\ast}}\right]
&=&\E\left[\int_{0} ^ {1} \dot{W}_{t} \left(\int_{t} ^ {1} h(\d u) \right) \d t\right] \\
&=& \E\left[\int_{0}^ {1}  W_{u} \, h(\d u)\right] \\
&=& \E\left[\int_{0}^ {1}  W_{u} \, \d\D_u f(W)\right], 
\end{eqnarray*}
which does not coincide with the left-hand side of (\ref{eq:ibp-bm}). Therefore, our formula in Corollary \ref{pro:ibp-bm} is different from the Shih's formula (\ref{31d-3}).

\bibliographystyle{siam}
\bibliography{pipliateekki}
\end{document}